\documentclass[12pt,reqno]{amsart}

\usepackage{amsmath, amssymb, amsthm,mathtools}
\usepackage{enumitem}
\usepackage{fullpage}
\usepackage[numbers,sort&compress]{natbib}
\usepackage[colorlinks=true,linkcolor=blue,citecolor=blue,urlcolor=blue]{hyperref}
\usepackage{orcidlink}

\newtheorem{theorem}{Theorem}[section]
\newtheorem{lemma}[theorem]{Lemma}
\newtheorem{proposition}[theorem]{Proposition}
\newtheorem{corollary}[theorem]{Corollary}
\theoremstyle{definition}

\newtheorem{remark}[theorem]{Remark}
\newtheorem{question}{Question}
\newtheorem{example}[theorem]{Example}

\begin{document}

\title{Asymptotic Numerical Ranges and Invariant Subspaces of Operators}
\thanks{Supported by the National Natural Science Foundation of China (No.~12271202).}

\author[Y. Ji]{Youqing Ji}
\address{School of Mathematics\\
Jilin University\\
Changchun 130012\\
P.R. China}
\email{jiyq@jlu.edu.cn}

\author[B. Yang]{Bangyuan Yang$^{*}$\,\orcidlink{0009-0002-0355-4458}}
\address{School of Mathematics\\
Jilin University\\
Changchun 130012\\
P.R. China}
\email{yangby23@mails.jlu.edu.cn}
\email{byangmath@gmail.com}
\date{}
\thanks{$^{*}$Corresponding author.}

\subjclass[2020]{Primary 47A15; Secondary 47A11, 47A12, 47B37}
\keywords{Asymptotic numerical range, local spectral radius, hyperinvariant subspace,
invariant subspace problem, weak operator topology}

\begin{abstract}
For a bounded linear operator $T$ on a complex separable Hilbert space $\mathcal{H}$ and a vector $x\in \mathcal{H}$, let $W_a(T,x)$ be the set of cluster points of the sequence $\{\langle|T^n|^{1/n}x,x\rangle\}_{n=1}^{\infty}$. We define the asymptotic numerical range and the asymptotic numerical radius of $T$, respectively, by $W_a(T):=\bigcup_{\|x\|=1}W_a(T,x)$ and $w_a(T):=\sup W_a(T)$. We prove that $W_a(T,x)$ is a compact interval for every $x\in\mathcal{H}$ and that $W_a(T)$ is a bounded interval, where intervals are allowed to be degenerate. Using the asymptotic numerical radius, we show that if there is a nonzero vector $x_0\in \mathcal{H}$ with $r(T,x_0)<w_a(T)$, where $r(T,x_0)$ denotes the local spectral radius of $T$ at $x_0$, then the subspaces $\overline{\{x\in\mathcal{H}:r(T,x)\le r(T,x_0)\}}$ and $\overline{\operatorname{span}\{T^n x_0:n\ge0\}}$, which are well known to be hyperinvariant and invariant for $T$, respectively, are both nontrivial. We also prove that $w_a(T)=r(T)$ for every hyponormal operator $T$, where $r(T)$ denotes the spectral radius of $T$. Finally, we investigate the connectedness of the set of WOT cluster points of the sequence $\{|T^n|^{1/n}\}_{n=1}^\infty$.
\end{abstract}

\maketitle

\section{Introduction}\label{intro}

Throughout this paper, we denote by $\mathcal{H}$ a complex separable Hilbert space endowed with the inner product $\langle\cdot,\cdot\rangle$, and by $\mathcal{B}(\mathcal{H})$ the algebra of all bounded linear operators on $\mathcal{H}$. The identity operator is denoted by $I$. For $A\in\mathcal{B}(\mathcal{H})$, we write $|A|\coloneqq(A^*A)^{1/2}$, and $\sigma(A)$ and $r(A)$ denote the spectrum and the spectral radius of $A$, respectively. We abbreviate the weak and strong operator topologies on $\mathcal{B}(\mathcal{H})$ by WOT and SOT, respectively. Recall that $A$ is said to be \emph{hyponormal} if $A^*A\ge AA^*$. The \emph{numerical range} of $A$ is defined by $W(A)\coloneqq\{\langle Ax,x\rangle:x\in\mathcal{H},\ \|x\|=1\}$, and the \emph{numerical radius} of $A$ is $w(A)\coloneqq\sup\{|\lambda|:\lambda\in W(A)\}$. For $x\in\mathcal{H}$, the \emph{local spectral radius} of $A$ at $x$ is defined by $r(A,x)\coloneqq\limsup_{n\to\infty}\|A^nx\|^{1/n}$.
By an \emph{interval} we mean a nonempty convex (equivalently, connected) subset of $\mathbb{R}$; in particular, singletons are also regarded as (degenerate) intervals.

A \emph{linear manifold} in $\mathcal{H}$ is a linear subset of $\mathcal{H}$ that need not be closed in norm; a norm-closed linear manifold is called a \emph{subspace}. Given $T\in\mathcal{B}(\mathcal{H})$, a linear manifold $M$ is said to be \emph{invariant} for $T$ if $T(M)\subseteq M$, and $M$ is said to be \emph{hyperinvariant} for $T$ if $S(M)\subseteq M$ for every $S\in\mathcal{B}(\mathcal{H})$ commuting with $T$. A subspace of $\mathcal{H}$ is said to be \emph{nontrivial} if it is different from both $\{0\}$ and $\mathcal{H}$.

The invariant subspace problem, which remains open, asks whether every operator in $\mathcal{B}(\mathcal{H})$ has a nontrivial invariant subspace. One approach to this problem is to associate suitable characteristic sets or asymptotic quantities with an operator and to exploit their structural properties in the construction of invariant or hyperinvariant subspaces.

Let $T\in \mathcal{B}(\mathcal{H})$. A classical consequence of the Riesz decomposition theorem, originating from the functional calculus introduced by Riesz in 1913, is that if the spectrum of $T$ is disconnected, then the associated Riesz projection yields a nontrivial hyperinvariant subspace; see \cite[Proposition~VII.4.11(a)]{conway1990Course}. Let $\mathbb D$ denote the open unit disk of the complex plane and let $H^\infty$ denote the algebra of bounded analytic functions on $\mathbb D$, equipped with the supremum norm $\|h\|_\infty\coloneqq\sup_{\lambda\in\mathbb D}|h(\lambda)|$. Brown, Chevreau, and Pearcy \cite[Theorem~4.1]{brown1979Contractions} proved that if $T$ satisfies $\|T\|=1$ and if $\sigma(T)\cap\mathbb D$ is sufficiently large in the sense that
\[
\sup_{\lambda\in\sigma(T)\cap\mathbb D}|h(\lambda)|=\|h\|_\infty
\]
for every $h\in H^\infty$, then $T$ has a nontrivial invariant subspace. Brown \cite[Theorem~4]{brown1987Hyponormal} subsequently proved that if $T$ is hyponormal and $R(\sigma(T))\neq C(\sigma(T))$, where $C(\sigma(T))$ denotes the space of continuous functions on $\sigma(T)$ and $R(\sigma(T))$ denotes the uniform closure in $C(\sigma(T))$ of the rational functions with poles off $\sigma(T)$, then $T$ has a nontrivial invariant subspace. In particular, this applies whenever $T$ is hyponormal and $\sigma(T)$ has nonempty interior. Brown \cite{brown1988Contractions} also obtained an asymptotic criterion: if $T$ is a $C_{00}$-contraction, that is, a contraction such that both $T^n\to0$ and $T^{*n}\to0$ in the SOT, and if $\sigma(T)$ contains the unit circle, then $T$ has a nontrivial invariant subspace. Eschmeier and Prunaru \cite[Corollary~3.4]{eschmeier2002Invariant} proved that if the localizable spectrum of $T$ is thick in the sense of Brown's theory, then $T$ has a nontrivial invariant subspace. In the setting of a type $\mathrm{II}_1$ factor $\mathfrak M$, Haagerup and Schultz \cite[Corollary~7.2]{haagerup2009Invariant} proved that if $T\in\mathfrak M$ and its Brown measure is not concentrated on a singleton, then $T$ admits a nontrivial hyperinvariant subspace. Douglas and Yang \cite{douglas2021Hermitian} introduced, for $T$ with $0$ as an isolated spectral point and $x\in\mathcal H\setminus\{0\}$,
\[
k_x\coloneqq\limsup_{z\to0}\frac{\log\|(T-z)^{-1}x\|^2}{\log\|(T-z)^{-1}\|^2},\qquad\Lambda(T)\coloneqq\{k_x:x\in\mathcal H,\, x\ne0\},
\]
where $\Lambda(T)$ is called the power set of $T$, and proved that, for $0\le\tau\le1$, the subsets
\[
N_\tau\coloneqq\{x\in\mathcal H:x=0\ \text{or}\ k_x\le\tau\}
\]
are hyperinvariant linear manifolds. In particular, if $T$ is quasinilpotent and $\Lambda(T)$ contains two distinct points $\tau_1<\tau_2$ such that both $N_{\tau_1}$ and $N_{\tau_2}$ are closed, then $T$ has a nontrivial hyperinvariant subspace; see \cite[Proposition~7.1 and Corollary~7.2]{douglas2021Hermitian}. The relationship between power sets and invariant subspaces was also studied in \cite{liang2018Quasinilpotent,ji2021Power,he2022Power,hu2024Power}.

Bourdon \cite[Proposition~4.6]{bourdon1997Orbits} proved that if $T$ is hyponormal and
\begin{equation}\label{bourdoncri}
r(T,x_0)<\|T\| \qquad\text{for some }x_0\ne0,
\end{equation}
then $T$ has a nontrivial hyperinvariant subspace.

Motivated by these results, it is natural to ask the following.

\begin{question}\label{whichtau}
Which further characteristic sets or asymptotic quantities attached to an operator can be used to produce nontrivial invariant or hyperinvariant subspaces?
\end{question}

We propose one such quantity, based on the asymptotic behavior of the sequence $\{|T^n|^{1/n}\}_{n=1}^{\infty}$. For a general operator $T\in \mathcal{B}(\mathcal{H})$ and $x\in \mathcal{H}$, we define
\[
W_a(T,x)\coloneqq\operatorname{Clust}\left\{\left\langle |T^n|^{1/n}x,x\right\rangle\right\}_{n=1}^{\infty}.
\]
Here, for a sequence $\{t_n\}_{n=1}^{\infty}$ in a metric space $(X,d)$, $\operatorname{Clust}\{t_n\}_{n=1}^{\infty}$ denotes the set of all its subsequential limits. Equivalently,
\[
\operatorname{Clust}\{t_n\}_{n=1}^{\infty}=\bigcap_{N=1}^{\infty}\overline{\{t_n:n\ge N\}}.
\]
Thus this set is always closed, and if $\{t_n\}_{n=1}^{\infty}$ is contained in a compact subset of $X$, then $\operatorname{Clust}\{t_n\}_{n=1}^{\infty}$ is nonempty compact and reduces to a singleton precisely when $\{t_n\}_{n=1}^{\infty}$ converges. We then set
\[
W_a(T)\coloneqq\bigcup_{\|x\|=1}W_a(T,x), \qquad w_a(T)\coloneqq\sup W_a(T),
\]
and call them the \emph{asymptotic numerical range} and the \emph{asymptotic numerical radius} of $T$, respectively.

For $\tau\ge0$, set
\[
M_\tau(T)\coloneqq\{x\in\mathcal{H}:r(T,x)\le\tau\}.
\]
We will prove that if
\begin{equation}\label{ourcri}
r(T,x_0)<w_a(T) \qquad\text{for some }x_0\ne0,
\end{equation}
then $T$ has the nontrivial hyperinvariant subspace $\overline{M_{r(T,x_0)}(T)}$ and the nontrivial invariant subspace $\overline{\operatorname{span}\{T^n x_0:n\ge0\}}$; see Theorem~\ref{nontriinv}. This is an answer to Question~\ref{whichtau}. Like the criteria introduced above by Douglas and Yang and by Bourdon, our criterion \eqref{ourcri} relies on comparing the sizes of suitable quantities. We prove the general comparison $w_a(T)\le r(T)$; see Proposition~\ref{impoineq}.
We also show that if $\{|T^n|^{1/n}\}_{n=1}^{\infty}$ has a subsequence $\{|T^{n_j}|^{1/n_j}\}_{j=1}^{\infty}$ which is an increasing operator sequence, then $w_a(T)=r(T)$; if, in addition, $n_1=1$, then
\[
w_a(T)=r(T)=w(T)=\|T\|;
\]
see Proposition~\ref{hyponequi}. As a consequence, the latter equalities hold for every hyponormal operator. In fact, they remain valid for a broader class of operators; see Corollary~\ref{phyponormal}. Thus, for hyponormal operators, our criterion \eqref{ourcri} coincides with Bourdon's criterion \eqref{bourdoncri}, so our result recovers Bourdon's theorem through a different approach and extends its conclusion.

In Example~\ref{mpv}, we apply \eqref{ourcri} to the sum $T$ of the multiplication operator by the independent variable and the Volterra operator on $L^2([0,1])$ and obtain a class of nontrivial hyperinvariant subspaces of $T$, which forms a subclass of the invariant subspace lattice characterized by Sarason \cite[Section~9]{sarason1974Invariant}.

Beyond its application to invariant subspaces, it is natural to ask what intrinsic structure the asymptotic numerical range possesses. Indeed, other sets arising in operator theory have themselves been the subject of structure theorems: the spectrum is always compact, the numerical range is convex by the Toeplitz--Hausdorff theorem \cite{toeplitz1918Algebraische,hausdorff1919Wertvorrat}, and Ji and Zhang \cite{ji2023Power} recently proved that the power set of a quasinilpotent operator is a right-closed subset of $[0,1]$, and conversely that every right-closed subset of $[0,1]$ containing $1$ is the power set of some quasinilpotent operator. In local spectral theory, Dane\v{s} \cite[Proposition~2]{danes1987Local} proved that, for each fixed vector $x$, the set of cluster points of the sequence $\{\|T^nx\|^{1/n}\}_{n=1}^{\infty}$ is a compact interval.

\begin{question}\label{strucque}
What geometric and topological properties does the asymptotic numerical range possess?
\end{question}

We will show that $W_a(T,x)$ is a compact interval for every $x\in \mathcal{H}$ and that $W_a(T)$ is a bounded interval (see Theorems~\ref{Qxcompactinterval} and~\ref{asyint}), an asymptotic analogue of the Toeplitz--Hausdorff theorem; this partially answers Question~\ref{strucque}.

The asymptotic behavior of the sequence $\{|T^n|^{1/n}\}_{n=1}^{\infty}$ has also been investigated in connection with invariant subspaces. Haagerup and Schultz \cite[Theorem~8.1]{haagerup2009Invariant} proved that, for an operator $T$ in a type $\mathrm{II}_1$ factor, the sequence $\{|T^n|^{1/n}\}_{n=1}^{\infty}$ converges in the SOT to a positive operator $A$. Moreover, the spectral projections of $A$ are closely related to the hyperinvariant subspaces constructed in their work.

A different line of research concerns the intrinsic asymptotic properties of the sequence $\{|T^n|^{1/n}\}_{n=1}^{\infty}$. In finite dimensions, Yamamoto \cite{yamamoto1967Extreme} obtained the following generalization of the spectral radius formula: for $T\in \mathcal{B}(\mathbb{C}^m)$,
\[
\lim_{n\to\infty}s_j(T^n)^{1/n}=|\lambda_j(T)|\qquad(1\le j\le m),
\]
where $s_j(\cdot)$ denotes the $j$-th largest singular value and the eigenvalues $\lambda_j(T)$ are arranged in decreasing order of modulus (counted with
multiplicity). The corresponding statement for compact operators on $\mathcal{H}$ is due to Davis \cite{davis1970Theorem}. These are results at the level of the individual singular values of the sequence $\{|T^n|^{1/n}\}_{n=1}^{\infty}$, arranged in decreasing order. In recent years, it has been shown that $\{|T^n|^{1/n}\}_{n=1}^{\infty}$ is convergent in norm in several settings: Nayak \cite{nayak2023Stronger} established this for $T\in\mathcal B(\mathbb C^m)$, Bala and Bhat \cite{bala2025Survey} subsequently for every compact operator $T$ on $\mathcal{H}$, and, more recently, Nayak and Shekhawat~\cite{nayak2026Convergence} for every spectral operator $T$ on $\mathcal{H}$.

For a general $T\in \mathcal{B}(\mathcal{H})$, however, no convergence of this kind is available. We consider the WOT cluster set
\[
\mathfrak C\coloneqq\operatorname{Clust}_{\operatorname{WOT}}\{|T^n|^{1/n}\}_{n=1}^{\infty}.
\]
There is no ambiguity in this notation, and $\mathfrak C$ reduces to a singleton precisely when $\{|T^n|^{1/n}\}_{n=1}^{\infty}$ converges in the WOT; see Remark~\ref{welldef}. It serves as the main intermediate object connecting the two questions above: we show that
\[
W_a(T,x)=\{\langle Ax,x\rangle:A\in\mathfrak C\} \quad\text{for every }x\in \mathcal{H}, \qquad\text{and}\qquad W_a(T)=\bigcup_{A\in\mathfrak C}W(A);
\]
see Proposition~\ref{WaC}.

\begin{question}\label{clusque}
What is the structure of the cluster set $\mathfrak C$, and what is $\mathfrak C$ for concrete operators?
\end{question}

Since $W_a(T,x)$ is a compact interval for every $x$ and $W_a(T)$ is a bounded interval, it is natural to ask whether $\mathfrak C$ is connected in the WOT. The set $\mathfrak C$ is indeed WOT-connected when $T$ is bounded below (see Corollary~\ref{bblcon} and Proposition~\ref{boubelcon}), whereas in general connectedness may fail (see Example~\ref{wsnbb}). For bounded below unilateral weighted shifts we describe $\mathfrak C$ completely; see Proposition~\ref{weishi}.

\section{Asymptotic Numerical Range}\label{asynur}

Throughout, $T\in\mathcal{B}(\mathcal{H})$ denotes a fixed operator unless otherwise specified.

For a real number $R\ge0$, write $\overline{\operatorname{ball}}_R\coloneqq\{A\in \mathcal{B}(\mathcal{H}):\|A\|\le R\}$. Recall that, since $\mathcal H$ is separable, the WOT restricted to each of these balls is metrizable; more precisely, we have the following standard result.

\begin{lemma}[{\cite[I.6]{davidson1996CAlgebras}}]\label{wotball}
Fix a dense sequence $\{x_i\}_{i=1}^{\infty}$ in the closed unit ball of $\mathcal{H}$, and define
\begin{equation}\label{wotmetric}
d(A,B)\coloneqq\sum_{i,j\ge1}2^{-i-j}\,\bigl|\langle (A-B)x_i,x_j\rangle\bigr| \qquad(A,B\in \mathcal{B}(\mathcal{H})).
\end{equation}
Then $d$ is a metric on $\mathcal{B}(\mathcal{H})$. For every $R\ge 0$, the space $(\overline{\operatorname{ball}}_R,\operatorname{WOT})$ is compact (and therefore closed in $(\mathcal{B}(\mathcal{H}),\operatorname{WOT})$, which is Hausdorff). Moreover, the restriction of $d$ to $\overline{\operatorname{ball}}_R$ metrizes the $\operatorname{WOT}$ on $\overline{\operatorname{ball}}_R$. Thus, $(\overline{\operatorname{ball}}_R,\operatorname{WOT})$ is sequentially compact.
\end{lemma}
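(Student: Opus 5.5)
The plan is to verify the three assertions of Lemma~\ref{wotball} directly from the definition of $d$. We use only Banach--Alaoglu-type compactness and the density of $\{x_i\}$.

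\textbf{Step 1: $d$ is a metric.} Each term satisfies $|\langle (A-B)x_i,x_j\rangle|\le\|A-B\|$ because $\|x_i\|,\|x_j\|\le1$. Since $\sum 2^{-i-j}=1$, the series converges and $d(A,B)\le\|A-B\|$. Symmetry and the triangle inequality hold termwise. For definiteness, suppose $d(A,B)=0$. Then $\langle (A-B)x_i,x_j\rangle=0$ for all $i,j$. By density of $\{x_i\}$ in the unit ball and continuity of the sesquilinear form $(x,y)\mapsto\langle (A-B)x,y\rangle$, it vanishes on the unit ball, hence everywhere, so $A=B$.

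\textbf{Step 2: WOT-compactness of $\overline{\operatorname{ball}}_R$.} Consider the map $A\mapsto(\langle Ax,y\rangle)_{x,y}$ into the product $\prod_{x,y\in\mathcal H}\{z:|z|\le R\|x\|\|y\|\}$. The product is compact by Tychonoff, and the map is a homeomorphism onto its image when the ball carries the WOT. The image is closed: a pointwise limit of such forms is again sesquilinear and bounded by $R\|x\|\|y\|$, so by Riesz representation it comes from an operator of norm at most $R$. Hence the ball is WOT-compact, and it is closed since the WOT is Hausdorff.

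\textbf{Step 3: $d$ metrizes the WOT on the ball.} This is the step needing care. First, the identity map from (ball, WOT) to (ball, $d$) is continuous. Given $\varepsilon>0$, choose $N$ with $\sum_{\max(i,j)>N}2^{-i-j}\cdot 2R<\varepsilon/2$; the tail is uniformly small because all operators in the ball have norm at most $R$. Then WOT-closeness on the finitely many pairs $(x_i,x_j)$ with $i,j\le N$ controls the remaining part of the sum. A continuous bijection from a compact space onto a Hausdorff (metric) space is a homeomorphism, so the restriction of $d$ metrizes the WOT on the ball. Sequential compactness then follows because compact metric spaces are sequentially compact.

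The main obstacle is the uniform tail estimate in Step 3. It relies essentially on the norm bound $R$, which is why metrizability holds only on bounded sets and not on all of $\mathcal B(\mathcal H)$. The compact-to-Hausdorff argument avoids having to prove the reverse continuity directly.
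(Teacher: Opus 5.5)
Your proof is correct. The paper gives no proof of this lemma and only cites Davidson \cite[I.6]{davidson1996CAlgebras}; your argument is the standard one.

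The three ingredients are all sound. Tychonoff gives WOT-compactness of the ball. The uniform tail bound $\|A-A_0\|\le 2R$ makes $d$ WOT-continuous on $\overline{\operatorname{ball}}_R$. The compact-to-Hausdorff argument then spares you the direct check that $d$-convergence implies WOT-convergence.
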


\begin{remark}\label{welldef}
There is no ambiguity in the notation
\[
\operatorname{Clust}_{\operatorname{WOT}}\{A_n\}_{n=1}^{\infty}
\]
for a norm-bounded sequence $\{A_n\}_{n=1}^{\infty}$ in $\mathcal{B}(\mathcal{H})$. More precisely, choose $R\ge0$ such that $\{A_n\}_{n=1}^{\infty}\subseteq\overline{\operatorname{ball}}_R$. The WOT cluster set of $\{A_n\}_{n=1}^{\infty}$ may be computed inside $\overline{\operatorname{ball}}_R$, inside any other $\overline{\operatorname{ball}}_{R'}$ containing the sequence, or inside $\mathcal{B}(\mathcal{H})$ itself; all three choices give the same set. Indeed, by Lemma~\ref{wotball}, it suffices to use the following elementary topological facts: if $Y$ is a compact subset of a Hausdorff space $X$ and $\{t_n\}_{n=1}^{\infty}\subseteq Y$, then every subnet limit in $X$ of $\{t_n\}_{n=1}^{\infty}$ already lies in $Y$; and if $Y$ is moreover metrizable, every such limit is in fact the limit of a subsequence of $\{t_n\}_{n=1}^{\infty}$. Applying these facts with $X=(\mathcal{B}(\mathcal{H}),\operatorname{WOT})$ and $Y=(\overline{\operatorname{ball}}_R,\operatorname{WOT})$ proves the assertion. In particular, $\operatorname{Clust}_{\operatorname{WOT}}\{A_n\}_{n=1}^{\infty}$ is a nonempty WOT closed subset of $\overline{\operatorname{ball}}_R$, and it reduces to a singleton precisely when $\{A_n\}_{n=1}^{\infty}$ converges in the WOT.
\end{remark}

Since $|T^n|^{1/n}\in\overline{\operatorname{ball}}_{\|T\|}$ for all $n$, Remark~\ref{welldef} applies with $A_n=|T^n|^{1/n}$.

\begin{proposition}\label{WaC}
For every $x\in \mathcal{H}$,
\[
W_a(T,x)=\{\langle Ax,x\rangle:A\in\mathfrak C\}.
\]
Moreover,
\[
W_a(T)=\bigcup_{A\in\mathfrak C}W(A).
\]
\end{proposition}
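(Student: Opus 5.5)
We prove the two inclusions of the first identity separately. The second identity then follows by taking the union over unit vectors. Throughout write $A_n\coloneqq|T^n|^{1/n}$, so $\{A_n\}_{n=1}^\infty\subseteq\overline{\operatorname{ball}}_{\|T\|}$, and we use the metrizability and sequential compactness from Lemma~\ref{wotball} together with Remark~\ref{welldef}.

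For the inclusion ``$\supseteq$'', let $A\in\mathfrak C$. By Remark~\ref{welldef}, $A$ is the WOT limit of some subsequence $\{A_{n_j}\}_{j=1}^\infty$. By the definition of the WOT, $\langle A_{n_j}x,x\rangle\to\langle Ax,x\rangle$, so $\langle Ax,x\rangle$ is a subsequential limit of $\{\langle A_nx,x\rangle\}_{n=1}^\infty$, that is, it lies in $W_a(T,x)$.

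For the inclusion ``$\subseteq$'', let $t\in W_a(T,x)$, so $\langle A_{n_j}x,x\rangle\to t$ along some subsequence. The subsequence $\{A_{n_j}\}_{j=1}^\infty$ lies in $\overline{\operatorname{ball}}_{\|T\|}$, which is WOT sequentially compact by Lemma~\ref{wotball}. Hence a further subsequence $\{A_{n_{j_k}}\}_{k=1}^\infty$ converges in the WOT to some $A$. This $A$ is a subsequential WOT limit of $\{A_n\}_{n=1}^\infty$, so $A\in\mathfrak C$. Moreover $\langle Ax,x\rangle=\lim_k\langle A_{n_{j_k}}x,x\rangle=t$. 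The only point needing care is this passage to a further subsequence, which is exactly what sequential compactness (and hence separability of $\mathcal H$) supplies; I expect no real obstacle beyond it.

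Finally,
\[
W_a(T)=\bigcup_{\|x\|=1}\{\langle Ax,x\rangle:A\in\mathfrak C\}=\bigcup_{A\in\mathfrak C}\{\langle Ax,x\rangle:\|x\|=1\}=\bigcup_{A\in\mathfrak C}W(A).
\]
The middle equality is just an interchange of the two unions, since both sides are the set of all $\langle Ax,x\rangle$ with $A\in\mathfrak C$ and $\|x\|=1$.
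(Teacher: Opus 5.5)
Your proof is correct and follows the paper's own argument step for step. You obtain $\supseteq$ from Remark~\ref{welldef}, which realizes each $A\in\mathfrak C$ as a subsequential WOT limit, and $\subseteq$ from the WOT sequential compactness of $\overline{\operatorname{ball}}_{\|T\|}$ in Lemma~\ref{wotball}; the second identity is then the same interchange of the unions over unit vectors and over $A\in\mathfrak C$.
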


\begin{proof}
Set $A_n\coloneqq|T^n|^{1/n}$. If $A\in\mathfrak C$, then by Remark~\ref{welldef} there exists a subsequence $\{n_j\}$ such that $A_{n_j}\xrightarrow{\operatorname{WOT}}A$. Hence $\langle A_{n_j}x,x\rangle\to\langle Ax,x\rangle$, so $\langle Ax,x\rangle\in W_a(T,x)$.

Conversely, let $\lambda\in W_a(T,x)$. Then $\langle A_{n_j}x,x\rangle\to\lambda$ for some subsequence $\{n_j\}$. Since $\{A_n\}\subseteq\overline{\operatorname{ball}}_{\|T\|}$, Lemma~\ref{wotball} allows us, after passing to a further subsequence, to assume that $A_{n_j}\xrightarrow{\operatorname{WOT}}A$ for some $A\in\mathfrak C$. Therefore, $\lambda=\langle Ax,x\rangle$, proving the first identity.

Taking the union over all unit vectors gives
\[
W_a(T)=\bigcup_{\|x\|=1}W_a(T,x)=\bigcup_{\|x\|=1}\bigcup_{A\in\mathfrak{C}}\left\{\langle A x,x\rangle\right\}=\bigcup_{A\in\mathfrak C}W(A).
\]
\end{proof}

The main purpose of this section is to establish the convexity of the asymptotic numerical range. We begin with its pointwise version.
\begin{theorem}\label{Qxcompactinterval}
For every $x\in \mathcal{H}$, $W_a(T,x)$ is a compact interval. More precisely,
\[
W_a(T,x)
=
\left[
\liminf_{n\to\infty}\left\langle |T^n|^{1/n}x,x\right\rangle,
\,\limsup_{n\to\infty}\left\langle |T^n|^{1/n}x,x\right\rangle
\right].
\]
\end{theorem}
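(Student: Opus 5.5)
The standard fact is that a bounded real sequence $\{t_n\}$ with $t_{n+1}-t_n\to0$ has as its cluster set exactly the closed interval $[\liminf t_n,\limsup t_n]$. I will reduce the theorem to this fact, applied to $t_n\coloneqq\langle |T^n|^{1/n}x,x\rangle$. This sequence is real, and $0\le t_n\le\|T\|\,\|x\|^2$, so it is bounded. Compactness and nonemptiness of $W_a(T,x)$ are then immediate from the general remarks on $\operatorname{Clust}$. The whole difficulty is showing $t_{n+1}-t_n\to0$. That step fails in general, so I will split into cases.

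\textbf{The elementary lemma.} Suppose $a=\liminf t_n<c<b=\limsup t_n$. Consecutive differences are eventually smaller than any $\varepsilon$, and the sequence crosses from below $c$ to above $c$ infinitely often. So infinitely many terms lie within $\varepsilon$ of $c$, and $c$ is a cluster point. Since $a$ and $b$ are themselves cluster points, the cluster set is the full closed interval.

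\textbf{Main obstacle: the differences need not tend to zero.} For an isometry, or any $T$ with $T^n$ bounded below uniformly in the exponent, $|T^n|^{1/n}$ is close to $|T^{n+1}|^{1/(n+1)}$. In general, however, $|T^n|$ and $|T^{n+1}|$ are unrelated: for a weighted shift with wild weights, $|T^n|^{1/n}$ is diagonal with entries $(w_k\cdots w_{k+n-1})^{1/n}$. These entries do vary slowly in $n$ when the weights are bounded above and below away from zero. When weights can be $0$ or tiny they jump, for example from $0$ to something positive. So I do not expect $t_{n+1}-t_n\to0$ to hold in general.

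\textbf{Fallback route via intermediate values.} Instead, I would show directly that every $c\in(a,b)$ is a cluster point by controlling how much $t_n$ can \emph{increase} in one step, since only one-sided control is needed for the crossing argument. The tools would be operator monotonicity of $s\mapsto s^{1/n}$ and the inequality $|T^{n+1}|^2=T^{n*}T^*TT^n\le\|T\|^2|T^n|^2$. Using these I would seek an estimate of the form
\[
t_{n+1}\le f_n(t_n)+o(1)
\]
with $f_n$ close to the identity. A useful comparison is $|T^{m}|^{2}\ge$ (something) together with Hansen's inequality $(X^*AX)^{\alpha}\ge X^*A^\alpha X$ for contractions $X$ and $0<\alpha<1$. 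If a one-sided bound on upward jumps holds, the crossing argument from below $c$ to above $c$ gives infinitely many terms in $[c,c+\varepsilon)$, which finishes the proof.

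Deriving this one-sided estimate is where I expect the real work, and possibly a counterexample-driven refinement, to lie. The first thing I would try is to normalize so that $\|T\|\le1$ and test the estimate on weighted shifts before attempting the general case.
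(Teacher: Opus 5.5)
\textbf{Gap: the upward-jump estimate is never proved.} Your architecture matches the paper's. Your one-sided crossing argument is exactly Lemma~\ref{onesidclupoi}, and the tools you name are the ones the paper uses: $|T^{n+1}|^2\le\|T\|^2|T^n|^2$ and operator monotonicity of fractional powers. Your diagnosis that two-sided control fails is also correct; for $x=\xi_2$ in Example~\ref{wsnbb}, $t_n$ drops from about $1$ to about $e^{-1}$ at $n=p_i-1$. But the proposal stops at the step that carries the proof. You never establish $\limsup_{n\to\infty}(t_{n+1}-t_n)\le0$, and you leave open whether it even holds. As it stands this is a plan, not a proof.

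\textbf{The missing estimate.} Put $M:=\max\{\|T\|,1\}$. L\"owner--Heinz with exponent $1/(2n+2)$, applied to $|T^{n+1}|^2\le M^2|T^n|^2$, gives $|T^{n+1}|^{1/(n+1)}\le M^{1/(n+1)}|T^n|^{1/(n+1)}$. Hence
\[
\bigl\langle\bigl(|T^{n+1}|^{1/(n+1)}-|T^n|^{1/(n+1)}\bigr)x,x\bigr\rangle\le M\bigl(M^{1/(n+1)}-1\bigr)\|x\|^2 .
\]
This compares with $|T^n|^{1/(n+1)}$, not $|T^n|^{1/n}$, and that exponent mismatch is what your sketch does not address. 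Even after normalizing to $\|T\|\le1$, one has $|T^n|^{1/(n+1)}\ge|T^n|^{1/n}$, which is the wrong direction, so a quantitative bound is needed. The functional calculus supplies it:
\[
\bigl\||T^n|^{1/n}-|T^n|^{1/(n+1)}\bigr\|\le\sup_{0\le r\le M}\bigl|r-r^{n/(n+1)}\bigr|=\max\Bigl\{\tfrac{1}{n+1}\bigl(\tfrac{n}{n+1}\bigr)^n,\,M-M^{n/(n+1)}\Bigr\}\to0 .
\]
This bound is uniform down to $r=0$, so no lower bound on $T$ is needed; small weights create only \emph{downward} jumps. Adding the two estimates gives $\limsup_{n\to\infty}(t_{n+1}-t_n)\le0$, which is the content of Lemmas~\ref{nexpowgen} and~\ref{convparcomoper}.

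\textbf{Fit with your template.} In your form $t_{n+1}\le f_n(t_n)$, this amounts to taking $f_n(t)=M^{1/(n+1)}t^{n/(n+1)}$ for unit $x$. The required bound $\langle |T^n|^{1/(n+1)}x,x\rangle\le t_n^{n/(n+1)}$ is H\"older--McCarthy applied to $|T^n|^{1/n}$ with exponent $n/(n+1)$, and $f_n$ is uniformly close to the identity on $[0,M]$. Hansen's inequality is not needed. With this estimate in hand, your crossing argument finishes the proof exactly as in the paper.
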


In order to prove Theorem~\ref{Qxcompactinterval}, we recall the following classical criterion.

\begin{lemma}[Barone {\cite[Theorem~3.2]{barone1939Limit}}]\label{barone}
Let $\{t_n\}_{n=1}^{\infty}$ be a sequence in a metric space $(X,d)$ such that $\{t_n:n\ge1\}$ is contained in a compact subset of $X$. If
\[
\lim_{n\to\infty}d(t_{n+1},t_n)=0,
\]
then $\operatorname{Clust}\{t_n\}_{n=1}^{\infty}$ is connected.
\end{lemma}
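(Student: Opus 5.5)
The plan is to argue by contradiction, using compactness to separate the two pieces of a hypothetical disconnection by a positive distance. Then we show that the sequence, whose steps eventually become smaller than that distance, cannot jump between the two pieces infinitely often.

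First, let $K\subseteq X$ be a compact set containing $\{t_n:n\ge1\}$, and set $C\coloneqq\operatorname{Clust}\{t_n\}_{n=1}^{\infty}=\bigcap_{N}\overline{\{t_n:n\ge N\}}$. Since $C$ is a decreasing intersection of nonempty closed subsets of the compact set $K$, it is nonempty and compact. Suppose $C$ is not connected. Then $C=A\cup B$ with $A,B$ nonempty, disjoint and closed in $C$, hence compact. Consequently
\[
\delta\coloneqq\inf\{d(a,b):a\in A,\ b\in B\}>0.
\]
Put $U\coloneqq\{y\in X:d(y,A)<\delta/3\}$ and $V\coloneqq\{y\in X:d(y,B)<\delta/3\}$. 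By the triangle inequality, $d(u,v)\ge\delta-2\delta/3=\delta/3$ for all $u\in U$ and $v\in V$.

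Second, we show that $t_n\in U\cup V$ for all sufficiently large $n$. Otherwise infinitely many $t_n$ lie in $K\setminus(U\cup V)$, which is compact because $U\cup V$ is open. Those terms would then have a subsequential limit outside $U\cup V$. That limit would lie in $C$, contradicting $C\subseteq U\cup V$. Moreover, a point of $A$ is a subsequential limit, so $t_n\in U$ for infinitely many $n$; likewise $t_n\in V$ for infinitely many $n$.

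Finally, choose $N$ such that for all $n\ge N$ we have both $t_n\in U\cup V$ and $d(t_{n+1},t_n)<\delta/3$. By the separation estimate, if $t_n\in U$ with $n\ge N$, then $t_{n+1}\notin V$, so $t_{n+1}\in U$. By induction, all later terms stay in $U$, contradicting the fact that infinitely many terms lie in $V$. Hence $C$ is connected.

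The only delicate point is the second step: it needs compactness of $K$ to rule out infinitely many terms escaping $U\cup V$, and compactness of $A$ and $B$ to guarantee $\delta>0$. Neither conclusion holds in a general metric space.
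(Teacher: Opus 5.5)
Your proof is correct and complete. The paper does not prove this lemma itself; it only cites Barone, so there is no paper proof to compare against directly. Your argument is the standard one.

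It is also the metric-space version of the argument the paper gives for its one-sided real variant, Lemma~\ref{onesidclupoi}. The correspondence is as follows:
\begin{itemize}
\item The gap $[a,b]\cap E=\{a,b\}$ plays the role of your separation $\delta>0$.
\item The index sets $L=\{n:s_n<a_1\}$ and $R=\{n:s_n>b_1\}$ play the roles of $U$ and $V$.
\item The observation that only finitely many indices lie outside $L\cup R$ is your second step.
\item The infinitely many crossings $n_k\in L$, $n_k+1\in R$ contradict the step hypothesis, just as in your final step.
\end{itemize}

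The order of $\mathbb{R}$ gives the paper one advantage. A sequence eventually confined to two separated pieces, and visiting both infinitely often, must cross from the lower piece to the upper one infinitely often. So only upward jumps need to be controlled. In a general metric space there is no orientation that singles out one direction of crossing, so the symmetric hypothesis $d(t_{n+1},t_n)\to0$ is the natural one. Compactness of the ambient set $K$ is what does the work that Bolzano--Weierstrass does on the line.
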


In our application to bounded sequences of real numbers, it is enough to control only the ``upward jumps''. We shall use the following one-sided variant.

\begin{lemma}\label{onesidclupoi}
Let $\{s_n\}_{n=1}^{\infty}$ be a bounded sequence of real numbers. If
\[
\limsup_{n\to\infty}(s_{n+1}-s_n)\le 0,
\]
then $\operatorname{Clust}\{s_n\}_{n=1}^{\infty}$ is a compact interval; that is,
\[
\operatorname{Clust}\{s_n\}_{n=1}^{\infty}
=
\bigl[\liminf_{n\to\infty}s_n,\,
\limsup_{n\to\infty}s_n\bigr].
\]
\end{lemma}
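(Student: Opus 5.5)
Set $\alpha\coloneqq\liminf_{n}s_n$ and $\beta\coloneqq\limsup_{n}s_n$. Both are finite because $\{s_n\}$ is bounded, and they belong to $\operatorname{Clust}\{s_n\}$. Since $\operatorname{Clust}\{s_n\}$ is closed and contained in $[\alpha,\beta]$, it suffices to prove the reverse inclusion: every $c$ with $\alpha<c<\beta$ is a cluster point. I will argue directly, a ``discrete intermediate value'' argument, rather than reduce to Lemma~\ref{barone}. Lemma~\ref{barone} needs two-sided smallness of the jumps, and here downward jumps may stay large.

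Fix $c\in(\alpha,\beta)$ and $\varepsilon>0$ with $\varepsilon<\beta-c$. By hypothesis there is $N_0$ with $s_{n+1}-s_n<\varepsilon$ for all $n\ge N_0$. Given any $N\ge N_0$, I will find $m\ge N$ with $|s_m-c|<\varepsilon$. Because $c>\alpha$, there is $k\ge N$ with $s_k<c$. Because $c<\beta$, and in fact $\beta>c+\varepsilon$, there is $l>k$ with $s_l>c$. Let $m$ be the smallest index in $(k,l]$ with $s_m\ge c$; it exists since $s_l>c$. Then $s_{m-1}<c$ and $m-1\ge k\ge N_0$, so
\[
c\le s_m< s_{m-1}+\varepsilon<c+\varepsilon .
\]
Thus every tail of the sequence comes within $\varepsilon$ of $c$. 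Since $\varepsilon$ can be taken arbitrarily small, $c\in\bigcap_N\overline{\{s_n:n\ge N\}}=\operatorname{Clust}\{s_n\}$.

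Combining this with the endpoints and closedness gives $\operatorname{Clust}\{s_n\}=[\alpha,\beta]$, a compact interval, which is degenerate exactly when the sequence converges. The only delicate point is the one-sidedness. An upward crossing of the level $c$ cannot skip over $[c,c+\varepsilon)$, so the argument must use crossings from below $c$ to above $c$. Such crossings exist infinitely often because $\alpha<c<\beta$. No bound on downward jumps is needed.
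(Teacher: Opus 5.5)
Your proof is correct, and it is essentially the paper's upward-crossing argument run directly instead of by contradiction. The paper assumes a gap $(a,b)$ in the cluster set and picks $a<a_1<b_1<b$. It notes that only finitely many terms lie in $[a_1,b_1]$, extracts infinitely many indices with $s_n<a_1$ and $s_{n+1}>b_1$, and so contradicts the hypothesis on upward jumps. Your first-crossing construction at a single level $c$ is the contrapositive of this, and it avoids the (easy) step of showing that the gap contains only finitely many terms.
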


\begin{proof}
Let $E\coloneqq\operatorname{Clust}\{s_n\}_{n=1}^{\infty}$. Since $\{s_n\}_{n=1}^{\infty}$ is bounded, $E$ is a nonempty compact subset of $\mathbb R$. It remains to prove that $E$ is connected.

Assume, to the contrary, that $E$ is not connected. Since $E$ is a compact subset of $\mathbb R$, there exist $a<b$ in $E$ such that
\[
[a,b]\cap E=\{a,b\}.
\]
Choose numbers $a_1,b_1$ such that
\[
a<a_1<b_1<b.
\]
Define
\[
L\coloneqq\{n:s_n<a_1\}, \qquad R\coloneqq\{n:s_n>b_1\}.
\]
It is easy to see that $L$ and $R$ are disjoint infinite sets. Moreover, there are at most finitely many indices outside $L\cup R$.

Therefore there exists a strictly increasing sequence of positive integers $\{n_k\}_{k=1}^{\infty}$ such that
\[
n_k\in L \qquad\text{and}\qquad n_k+1\in R
\]
for every $k$. Consequently,
\[
s_{n_k+1}-s_{n_k}>b_1-a_1>0
\]
for every $k$. This contradicts the assumption
\[
\limsup_{n\to\infty}(s_{n+1}-s_n)\le0.
\]
This contradiction shows that $E$ is connected and therefore a compact interval.
\end{proof}

We shall also need the following two standard operator inequalities. For proofs, see \cite[Corollary~I.4.6]{davidson1996CAlgebras} and \cite{pedersen1972Shorter}, respectively.

\begin{lemma}\label{opineq}
\begin{enumerate}[label=\textup{(\roman*)},leftmargin=*]
\item If $A\le B$, then, for every $X\in \mathcal{B}(\mathcal{H})$,
\begin{equation}\label{cong}
X^*AX\le X^*BX.
\end{equation}
\item \textup{(L\"owner--Heinz inequality)} The function $t\mapsto t^{\alpha}$ is operator monotone on $[0,\infty)$ for every $0<\alpha\le1$; that is, if $A\ge B\ge0$ and $0<\alpha\le1$, then
\begin{equation}\label{lhe}
A^{\alpha}\ge B^{\alpha}.
\end{equation}
\end{enumerate}
\end{lemma}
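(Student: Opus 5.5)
Both parts of Lemma~\ref{opineq} are classical, and the plan is to prove them directly: (i) by a one-line quadratic-form computation, and (ii) by Pedersen's midpoint-convexity argument.

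\textbf{Part (i).} For every $y\in\mathcal H$ I would compute
\[
\langle X^*AXy,y\rangle=\langle A(Xy),Xy\rangle\le\langle B(Xy),Xy\rangle=\langle X^*BXy,y\rangle .
\]
The middle inequality is the hypothesis $A\le B$ applied to the vector $Xy$. This is exactly \eqref{cong}.

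\textbf{Part (ii), reduction to invertible $A$.} Let $A\ge B\ge0$. It suffices to treat the case where $A$ is invertible. In general, replace $A$ by $A+\varepsilon I\ge B$ and let $\varepsilon\downarrow0$. The functions $t\mapsto(t+\varepsilon)^\alpha$ converge uniformly to $t^\alpha$ on $[0,\|A\|]$, so $(A+\varepsilon I)^\alpha\to A^\alpha$ in norm. The order relation $B^\alpha\le(\cdot)$ is preserved under norm limits.

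\textbf{Part (ii), the set $S$ and its properties.} With $A$ invertible, let $S$ be the set of $\alpha\in[0,1]$ such that $B^\alpha\le A^\alpha$. Conjugating by $A^{-\alpha/2}$ and using part (i) in both directions shows
\[
\alpha\in S\iff A^{-\alpha/2}B^{\alpha}A^{-\alpha/2}\le I\iff \|B^{\alpha/2}A^{-\alpha/2}\|\le1 .
\]
I would then establish three facts about $S$.
\begin{enumerate}[label=\textup{(\alph*)},leftmargin=*]
\item $0,1\in S$. This is trivial.
\item $S$ is closed. If $\alpha_k\to\alpha$, then $A^{\alpha_k}\to A^\alpha$ and $B^{\alpha_k}\to B^\alpha$ in norm, because $t^{\alpha_k}\to t^\alpha$ uniformly on compact subsets of $[0,\infty)$ (Dini, or a direct estimate). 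Again the inequality passes to the limit.
\item $S$ is midpoint convex. Let $\alpha,\beta\in S$ and $\gamma=(\alpha+\beta)/2$. The operator $P:=A^{-\gamma/2}B^{\gamma}A^{-\gamma/2}$ is positive, so $\|P\|=r(P)$. Since spectral radius is similarity invariant, conjugating by $A^{(\beta-\alpha)/4}$ gives $r(P)=r\bigl(A^{-\alpha/2}B^{\gamma}A^{-\beta/2}\bigr)$. Splitting $B^\gamma=B^{\alpha/2}B^{\beta/2}$,
\[
\|P\|\le\|A^{-\alpha/2}B^{\alpha/2}\|\,\|B^{\beta/2}A^{-\beta/2}\|\le1 .
\]
Here the first factor has norm at most $1$ because it is the adjoint of $B^{\alpha/2}A^{-\alpha/2}$ and $\alpha\in S$; the second factor has norm at most $1$ because $\beta\in S$. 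Hence $\gamma\in S$.
\end{enumerate}
From (a) and (c), $S$ contains all dyadic rationals in $[0,1]$. By (b), $S=[0,1]$, which proves \eqref{lhe}.

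\textbf{Main obstacle.} The only delicate step is (c), the midpoint step. One must use that the norm of a positive operator equals its spectral radius, together with the similarity invariance of the spectrum. This is precisely what allows the unbalanced factorization $A^{-\alpha/2}B^{\gamma}A^{-\beta/2}$, which is where the two hypotheses $\alpha\in S$ and $\beta\in S$ enter separately.
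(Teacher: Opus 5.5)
Your proposal is correct and follows the same route as the sources the paper cites instead of giving its own proof: (i) is the standard quadratic-form computation behind Davidson's Corollary~I.4.6, and (ii) is exactly Pedersen's argument, in which the set $S$ of admissible exponents is shown to be closed, to contain $0$ and $1$, and to be midpoint convex via $r(A^{-\gamma/2}B^{\gamma}A^{-\gamma/2})=r(A^{-\alpha/2}B^{\gamma}A^{-\beta/2})$. The one inaccuracy is in (b): with the convention $B^0=I$ and $B$ singular, $t^{\alpha_k}\to t^{0}$ is not uniform near $t=0$, so your uniform-convergence argument fails at $\alpha=0$; this is harmless, since $0\in S$ by (a) and every $\alpha\in(0,1]$ is a limit of dyadic rationals bounded away from $0$, where your argument does apply.
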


We will use Lemma~\ref{onesidclupoi} to prove Theorem~\ref{Qxcompactinterval}; the next two lemmas provide the required estimates.

\begin{lemma}\label{nexpowgen}
For every $x\in \mathcal{H}$,
\[
\limsup_{n\to\infty} \left\langle \left( |T^{n+1}|^{1/(n+1)}-|T^n|^{1/(n+1)} \right)x,x \right\rangle \le 0.
\]
\end{lemma}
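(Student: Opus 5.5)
The plan is to compare $|T^{n+1}|$ with $|T^n|$ directly through an operator inequality. Then I take $(n+1)$-th roots using the L\"owner--Heinz inequality, and finally observe that the resulting multiplicative constant tends to $1$. If $T=0$ every term vanishes and there is nothing to prove, so assume $T\neq 0$.

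\emph{Step 1 (comparison of squares).} Since $T^*T\le \|T\|^2 I$, Lemma~\ref{opineq}(i) applied with $X=T^n$ gives
\[
|T^{n+1}|^2=(T^n)^*\,T^*T\,T^n\le \|T\|^2 (T^n)^*T^n=\|T\|^2|T^n|^2 .
\]

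\emph{Step 2 (taking roots).} Both sides of this inequality are positive. We have $0<\alpha\coloneqq\frac{1}{2(n+1)}\le 1$, so Lemma~\ref{opineq}(ii) yields
\[
|T^{n+1}|^{1/(n+1)}\le \|T\|^{1/(n+1)}\,|T^n|^{1/(n+1)} .
\]
Here we use that $(cB)^{\alpha}=c^{\alpha}B^{\alpha}$ for a scalar $c\ge0$ and $B\ge0$, which holds by the functional calculus. Consequently, for every $x\in\mathcal H$,
\[
\left\langle \left(|T^{n+1}|^{1/(n+1)}-|T^n|^{1/(n+1)}\right)x,x\right\rangle
\le \left(\|T\|^{1/(n+1)}-1\right)\left\langle |T^n|^{1/(n+1)}x,x\right\rangle .
\]

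\emph{Step 3 (the limit).} The operator $|T^n|^{1/(n+1)}$ is positive, and its norm is
\[
\||T^n|\|^{1/(n+1)}=\|T^n\|^{1/(n+1)}\le \|T\|^{n/(n+1)}\le \max\{1,\|T\|\}.
\]
Hence
\[
0\le\left\langle |T^n|^{1/(n+1)}x,x\right\rangle\le \max\{1,\|T\|\}\,\|x\|^2 .
\]
Because $\|T\|>0$, we have $\|T\|^{1/(n+1)}-1\to 0$. Thus the right-hand side in Step 2 is bounded in absolute value by $\bigl|\|T\|^{1/(n+1)}-1\bigr|\max\{1,\|T\|\}\|x\|^2$, which tends to $0$. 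Taking $\limsup$ gives the claim.

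I expect no serious obstacle. The only point needing care is Step 2: the L\"owner--Heinz inequality must be applied to the squares, which requires the exponent $1/(2(n+1))$, and this exponent indeed lies in $(0,1]$. The scalar factor $\|T\|^2$ must also be pulled out correctly.
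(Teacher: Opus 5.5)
Your proof is correct and follows the paper's argument: congruence gives $|T^{n+1}|^2\le c^2|T^n|^2$, then L\"owner--Heinz with exponent $1/(2n+2)$, then a uniform bound on $|T^n|^{1/(n+1)}$. The one difference is that the paper uses $M=\max\{\|T\|,1\}$ instead of $\|T\|$. This keeps the coefficient $M^{1/(n+1)}-1$ nonnegative, so neither the separate case $T=0$ nor absolute values are needed, and it gives the operator bound $\le M(M^{1/(n+1)}-1)I$ that the paper reuses in Lemma~\ref{nexpowbb}.
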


\begin{proof}
Set $M\coloneqq\max\{\|T\|,1\}$. Then $T^*T\le M^2I$, and hence, by inequality~\eqref{cong},
\[
|T^{n+1}|^2 = (T^n)^*T^*T(T^n) \le M^2(T^n)^*T^n=M^2|T^n|^2.
\]
Applying the L\"owner--Heinz inequality \eqref{lhe} with exponent $\alpha=1/(2n+2)$, we get
\[
|T^{n+1}|^{1/(n+1)} \le M^{1/(n+1)}|T^n|^{1/(n+1)}.
\]
Therefore
\[
|T^{n+1}|^{1/(n+1)}-|T^n|^{1/(n+1)}
\le \left(M^{1/(n+1)}-1\right)|T^n|^{1/(n+1)}.
\]
By the choice of $M$, we have $|T^n|^{1/(n+1)}\le M^{n/(n+1)}I\le MI$, and therefore
\[
\left(M^{1/(n+1)}-1\right)|T^n|^{1/(n+1)}
\le M\left(M^{1/(n+1)}-1\right)I.
\]

Thus, for every $x\in \mathcal{H}$,
\[
\left\langle \left( |T^{n+1}|^{1/(n+1)}-|T^n|^{1/(n+1)} \right)x,x \right\rangle \le M\left(M^{1/(n+1)}-1\right)\|x\|^2.
\]
It follows that
\[
\limsup_{n\to\infty} \left\langle \left( |T^{n+1}|^{1/(n+1)}-|T^n|^{1/(n+1)} \right)x,x \right\rangle \le0.
\]
\end{proof}

\begin{lemma}\label{convparcomoper}
\[
\lim_{n\to\infty}\bigl\||T^n|^{1/n}-|T^n|^{1/(n+1)}\bigr\|=0.
\]
\end{lemma}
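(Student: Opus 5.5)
Set $P_n\coloneqq|T^n|$, a positive operator with $\|P_n\|\le\|T\|^n$. By the continuous functional calculus,
\[
\bigl\||T^n|^{1/n}-|T^n|^{1/(n+1)}\bigr\|=\sup_{t\in\sigma(P_n)}\bigl|t^{1/n}-t^{1/(n+1)}\bigr|.
\]
It therefore suffices to bound the scalar function $f_n(t)\coloneqq t^{1/n}-t^{1/(n+1)}$ uniformly on $[0,\|T\|^n]$ and show that the bound tends to $0$. Substituting $s=t^{1/n}$, so that $s\in[0,\|T\|]$, this becomes $g_n(s)\coloneqq s-s^{n/(n+1)}$ on $[0,\|T\|]$.

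For $s\in[0,C]$, where $C\coloneqq\max\{\|T\|,1\}$, write $g_n(s)=s^{n/(n+1)}\bigl(s^{1/(n+1)}-1\bigr)$. Split into two ranges.

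\emph{Case $s\ge1$.} Here $0\le g_n(s)\le C\bigl(C^{1/(n+1)}-1\bigr)$, which tends to $0$.

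\emph{Case $0\le s<1$.} Here $|g_n(s)|=s^{n/(n+1)}\bigl(1-s^{1/(n+1)}\bigr)$. Put $u=s^{1/(n+1)}\in[0,1)$, so $|g_n(s)|=u^n(1-u)$. Elementary calculus gives the maximum at $u=n/(n+1)$, hence
\[
|g_n(s)|\le \Bigl(\frac{n}{n+1}\Bigr)^{n}\frac1{n+1}\le\frac1{n+1}.
\]

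Combining the two cases,
\[
\sup_{0\le s\le\|T\|}|g_n(s)|\le\max\Bigl\{\tfrac1{n+1},\,C\bigl(C^{1/(n+1)}-1\bigr)\Bigr\}\to0,
\]
and so the norm in the statement tends to $0$.

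The only step needing care is the region near $t=0$. There $t^{1/n}$ and $t^{1/(n+1)}$ are not Lipschitz, so a mean value estimate in $t$ fails. The substitution $u=s^{1/(n+1)}$ sidesteps this and gives the clean bound $u^n(1-u)\le 1/(n+1)$.
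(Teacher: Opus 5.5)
Your proof is correct and is essentially the paper's argument. The paper likewise applies the functional calculus to $A_n=|T^n|^{1/n}$ to reduce to $\sup_{0\le r\le M}\bigl|r-r^{n/(n+1)}\bigr|$ with $M=\max\{\|T\|,1\}$. Its critical-point value $\frac{1}{n+1}\bigl(\frac{n}{n+1}\bigr)^n$ at $r_n=\bigl(\frac{n}{n+1}\bigr)^{n+1}$ is exactly your maximum of $u^n(1-u)$, and its endpoint term $M-M^{n/(n+1)}$ plays the role of your $s\ge1$ bound; your substitution $u=s^{1/(n+1)}$ is just a slightly tidier way to find that maximum than differentiating piecewise.
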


\begin{proof}
Set
\[
A_n\coloneqq|T^n|^{1/n}, \qquad M\coloneqq\max\{\|T\|,1\}.
\]
Then $A_n$ is positive and
\[
\sigma(A_n)\subseteq [0,\|A_n\|]\subseteq [0,\|T\|]\subseteq [0,M].
\]
By the continuous functional calculus, we obtain
\[
\bigl\||T^n|^{1/n}-|T^n|^{1/(n+1)}\bigr\|= \bigl\|A_n-A_n^{n/(n+1)}\bigr\|=\sup_{r\in\sigma(A_n)} \left|r-r^{n/(n+1)}\right|\le\sup_{0\le r\le M} \left|r-r^{n/(n+1)}\right|.
\]

We now estimate the last supremum. Define $f_n(r)\coloneqq\bigl|r-r^{n/(n+1)}\bigr|$ for $r\in [0,M]$. The continuous function $f_n$ is differentiable on $(0,1)\cup(1,M)$, and
\[
f_n'(r) =
\begin{cases}
\dfrac{n}{n+1}r^{-1/(n+1)}-1, & 0<r<1,\\[8pt]
1-\dfrac{n}{n+1}r^{-1/(n+1)}, & 1<r<M.
\end{cases}
\]
Thus the only critical point of $f_n$ is
\[
r_n\coloneqq\left(\frac{n}{n+1}\right)^{n+1}\in(0,1).
\]
Since $f_n$ attains its maximum on $[0,M]$, every maximum point must be an endpoint, the critical point $r_n$, or the point $1$, where $f_n$ may fail to be differentiable. Hence
\[
\sup_{0\le r\le M} f_n(r) = \max\{f_n(0),\,f_n(r_n),\,f_n(1),\,f_n(M)\}.
\]
Now
\[
f_n(0)=f_n(1)=0,
\]
and
\[
f_n(r_n) = r_n^{n/(n+1)}-r_n = \left(\frac{n}{n+1}\right)^n - \left(\frac{n}{n+1}\right)^{n+1} = \frac{1}{n+1} \left(\frac{n}{n+1}\right)^n.
\]
Moreover, since $M\ge 1$,
\[
f_n(M)=M-M^{n/(n+1)}.
\]
Therefore,
\[
\lim_{n\to\infty}\left(\sup_{0\le r\le M} f_n(r)\right)
=
\lim_{n\to\infty}
\max\left\{
\frac{1}{n+1}\left(\frac{n}{n+1}\right)^n,
\,M-M^{n/(n+1)}
\right\}
=0.
\]

Consequently,
\[
\lim_{n\to\infty}\bigl\||T^n|^{1/n}-|T^n|^{1/(n+1)}\bigr\|=0.
\]
\end{proof}

\begin{proof}[\textbf{\textup{Proof of Theorem~\ref{Qxcompactinterval}}}]
It only remains to apply Lemma~\ref{onesidclupoi}. Fix $x\in\mathcal{H}$ and set
\[
s_n\coloneqq\left\langle |T^n|^{1/n}x,x\right\rangle.
\]
The boundedness of $\{s_n\}_{n=1}^{\infty}$ follows from
\[
0\le s_n \le \||T^n|^{1/n}\|\,\|x\|^2 \le \|T\|\,\|x\|^2.
\]
We write
\[
s_{n+1}-s_n = \left\langle \left(|T^{n+1}|^{1/(n+1)}-|T^n|^{1/(n+1)}\right)x,x \right\rangle + \left\langle \left(|T^n|^{1/(n+1)}-|T^n|^{1/n}\right)x,x \right\rangle.
\]
By Lemmas~\ref{nexpowgen} and~\ref{convparcomoper},
\[
\limsup_{n\to\infty}(s_{n+1}-s_n)\le0.
\]
The conclusion now follows from Lemma~\ref{onesidclupoi} and the definition of $W_a(T,x)$.
\end{proof}

Theorem~\ref{Qxcompactinterval} yields the following asymptotic analogue of the Toeplitz--Hausdorff theorem.

\begin{theorem}\label{asyint}
The asymptotic numerical range $W_a(T)$ is a bounded interval.
\end{theorem}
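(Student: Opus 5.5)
Since $0\le\langle |T^n|^{1/n}x,x\rangle\le\|T\|$ for every unit vector $x$, we have $W_a(T)\subseteq[0,\|T\|]$. By Theorem~\ref{Qxcompactinterval}, each $W_a(T,x)$ is a nonempty compact interval, so $W_a(T)$ is nonempty and bounded. It remains to show that $W_a(T)$ is convex. We will show that the union of the intervals $W_a(T,x)$, taken over the unit sphere, is connected, by moving continuously from one unit vector to another.

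Let $\alpha\in W_a(T,x)$ and $\beta\in W_a(T,y)$ with $\|x\|=\|y\|=1$, and let $\alpha<\gamma<\beta$. We need a unit vector $z$ with $\gamma\in W_a(T,z)$. If $\mathcal H$ is one-dimensional, $x$ and $y$ differ by a unimodular scalar, so $W_a(T,x)=W_a(T,y)$ and we are done. Otherwise the unit sphere is path-connected. Take a continuous path $z:[0,1]\to\{\|z\|=1\}$ with $z(0)=x$ and $z(1)=y$; for instance normalize $(1-t)x+ty$, or if $y=-cx$ go through a vector orthogonal to $x$.

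Define $u(t)=\limsup_n\langle|T^n|^{1/n}z(t),z(t)\rangle$ and $\ell(t)=\liminf_n\langle|T^n|^{1/n}z(t),z(t)\rangle$. By Theorem~\ref{Qxcompactinterval}, $W_a(T,z(t))=[\ell(t),u(t)]$. The key estimate is uniform continuity: since $\||T^n|^{1/n}\|\le\|T\|$ for all $n$,
\[
\bigl|\langle |T^n|^{1/n}z,z\rangle-\langle |T^n|^{1/n}w,w\rangle\bigr|\le 2\|T\|\,\|z-w\|
\]
for unit vectors $z,w$, uniformly in $n$. Hence $u$ and $\ell$ are Lipschitz in $z$, and so $u(t)$ and $\ell(t)$ are continuous in $t$.

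Now suppose $\gamma\notin[\ell(t),u(t)]$ for every $t$. Let $S_-=\{t:u(t)<\gamma\}$ and $S_+=\{t:\ell(t)>\gamma\}$. These sets are open, disjoint (because $\ell\le u$), and cover $[0,1]$. Moreover $0\in S_-$, since $\ell(0)\le\alpha<\gamma$ forces $u(0)<\gamma$ under our assumption. Similarly $1\in S_+$. This contradicts the connectedness of $[0,1]$, so some $z(t)$ satisfies $\gamma\in W_a(T,z(t))$, which proves convexity.

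I expect the main obstacle to be mild: making sure that the limsup and liminf depend continuously on the vector. The uniform-in-$n$ Lipschitz bound above handles this. Alternatively, one could bypass the path argument by observing that $W_a(T)$ is a union of intervals indexed by a connected set, with upper and lower endpoints that are continuous functions.
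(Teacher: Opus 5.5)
Your proof is correct, but it takes a different route from the paper's.

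The paper works with the cluster-set representation $W_a(T)=\bigcup_{A\in\mathfrak C}W(A)$ from Proposition~\ref{WaC}. Given points $\langle A_1x,x\rangle$ and $\langle A_2y,y\rangle$ with $A_1,A_2\in\mathfrak C$, two intervals inside $W_a(T)$ link them. The interval $W_a(T,x)$ from Theorem~\ref{Qxcompactinterval} joins $\langle A_1x,x\rangle$ to $\langle A_2x,x\rangle$. The interval $W(A_2)$, convex by Toeplitz--Hausdorff, joins $\langle A_2x,x\rangle$ to $\langle A_2y,y\rangle$. So connectedness comes from a two-link chain of intervals, and the vector is never moved continuously.

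You instead move the vector along a path in the unit sphere. The uniform-in-$n$ bound $|\langle A_nz,z\rangle-\langle A_nw,w\rangle|\le 2\|T\|\,\|z-w\|$ makes the endpoint functions $\ell$ and $u$ continuous, and you finish with a connectedness argument on $[0,1]$.

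What your route buys:
\begin{itemize}
\item It needs only Theorem~\ref{Qxcompactinterval} and the norm bound $\||T^n|^{1/n}\|\le\|T\|$. It bypasses the WOT compactness behind Proposition~\ref{WaC} and the Toeplitz--Hausdorff theorem.
\item As a by-product, $x\mapsto\liminf_n\langle|T^n|^{1/n}x,x\rangle$ and $x\mapsto w_a(T,x)$ are $2\|T\|$-Lipschitz on the unit sphere.
\end{itemize}

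What the paper's route buys: it is shorter given the machinery already in place, and it displays $W_a(T)$ as a union of the convex sets $W(A)$, $A\in\mathfrak C$.

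A minor point: the unit sphere of a complex Hilbert space is already path-connected in dimension one. When $y=-x$ you can use $t\mapsto e^{i\pi t}x$. Your case split is therefore harmless but unnecessary.
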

\begin{proof}
Since $W_a(T)\subseteq[0,\|T\|]$, it suffices to show that $W_a(T)$ is connected, that is, $W_a(T)$ has only one connected component.

By Proposition~\ref{WaC}, write two arbitrary points of $W_a(T)$ as $\langle A_1x,x\rangle$ and $\langle A_2y,y\rangle$, where $A_1,A_2\in\mathfrak C$ and $\|x\|=\|y\|=1$. By Theorem~\ref{Qxcompactinterval}, $W_a(T, x)\subseteq W_a(T)$ is a compact interval containing both $\langle A_1 x, x\rangle$ and $\langle A_2 x, x\rangle$. Since $W(A_2)\subseteq W_a(T)$ is an interval (by the Toeplitz--Hausdorff theorem) containing both $\langle A_2 x, x\rangle$ and $\langle A_2 y, y\rangle$, all three points lie in the same connected component of $W_a(T)$. Hence $W_a(T)$ has a single connected component, and is therefore a bounded interval.
\end{proof}

\begin{remark}
If $T$ is quasinormal, i.e., $T^*TT=TT^*T$, then a simple induction gives $|T^n|^{1/n}\equiv|T|$ for all $n$, whence $W_a(T)=W(|T|)$. In particular, if $J$ is of one of the following forms: $(a,b)$, $[a,b)$, $(a,b]$, or $[a,b]$, with $0\le a<b<\infty$, or $\{a\}$, with $0\le a<\infty$, then one can easily construct a positive diagonal operator $D$ such that $W(D)=J$. Consequently, every bounded interval in $[0,\infty)$ can be realized as the asymptotic numerical range of some operator.
\end{remark}

The proof of Lemma~\ref{nexpowgen} suggests the following stronger conclusion under the additional assumption that $T$ is bounded below.

\begin{lemma}\label{nexpowbb}
If $T$ is bounded below, then
\[
\lim_{n\to\infty}\bigl\| |T^{n+1}|^{1/(n+1)}-|T^n|^{1/(n+1)} \bigr\|=0.
\]
\end{lemma}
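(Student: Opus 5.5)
If $T$ is bounded below, there is $c>0$ with $T^*T\ge c^2I$. Congruence as in Lemma~\ref{nexpowgen} then gives a two-sided bound. Put $M\coloneqq\max\{\|T\|,1\}$ and $m\coloneqq\min\{c,1\}\in(0,1]$. Then $m^2I\le T^*T\le M^2I$, and inequality~\eqref{cong} with $X=T^n$ yields
\[
m^2|T^n|^2\le |T^{n+1}|^2\le M^2|T^n|^2 .
\]
Applying the L\"owner--Heinz inequality~\eqref{lhe} with $\alpha=1/(2n+2)$ to both inequalities gives
\[
m^{1/(n+1)}|T^n|^{1/(n+1)}\le |T^{n+1}|^{1/(n+1)}\le M^{1/(n+1)}|T^n|^{1/(n+1)} .
\]

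Subtracting $|T^n|^{1/(n+1)}$ throughout, we get
\[
-\bigl(1-m^{1/(n+1)}\bigr)B_n\le D_n\le \bigl(M^{1/(n+1)}-1\bigr)B_n,
\]
where $D_n\coloneqq|T^{n+1}|^{1/(n+1)}-|T^n|^{1/(n+1)}$ and $B_n\coloneqq|T^n|^{1/(n+1)}$. As in the proof of Lemma~\ref{nexpowgen}, $0\le B_n\le MI$. So $D_n$ is self-adjoint with
\[
-\varepsilon_n I\le D_n\le \varepsilon_n I,\qquad \varepsilon_n\coloneqq M\max\bigl\{M^{1/(n+1)}-1,\;1-m^{1/(n+1)}\bigr\}.
\]

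For a self-adjoint operator, $-\varepsilon I\le D\le\varepsilon I$ implies $\|D\|\le\varepsilon$, since $\|D\|=\sup_{\|x\|=1}|\langle Dx,x\rangle|$. Hence $\|D_n\|\le\varepsilon_n$. Finally, $m^{1/(n+1)}\to1$ and $M^{1/(n+1)}\to1$ because $m,M>0$, so $\varepsilon_n\to0$, which proves the lemma.

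The only real step is the lower bound $T^*T\ge c^2I$, and this is exactly where bounded below is used; without it the lower estimate degenerates and only the one-sided conclusion of Lemma~\ref{nexpowgen} survives. The rest combines the two operator inequalities in Lemma~\ref{opineq}, and the one point to check is that L\"owner--Heinz is applied to positive operators with exponent in $(0,1]$, which holds here.
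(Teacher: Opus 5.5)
Your proof is correct and follows the paper's argument: you use the two-sided congruence bound $m^2|T^n|^2\le|T^{n+1}|^2\le M^2|T^n|^2$, apply L\"owner--Heinz with exponent $1/(2n+2)$, and bound everything by $MI$. The only cosmetic difference is on the lower side, where you control $-D_n$ by $(1-m^{1/(n+1)})|T^n|^{1/(n+1)}$ instead of the paper's $(m^{-1/(n+1)}-1)|T^{n+1}|^{1/(n+1)}$; this changes nothing in the argument.
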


\begin{proof}
Set $M\coloneqq\max\{\|T\|,1\}$. The estimate established in the proof of Lemma~\ref{nexpowgen} gives
\begin{equation}\label{upperbd}
|T^{n+1}|^{1/(n+1)}-|T^n|^{1/(n+1)}
\le M\left(M^{1/(n+1)}-1\right)I.
\end{equation}
Since $T$ is bounded below, set
\[
c\coloneqq\inf_{\|x\|=1}\|Tx\|>0,
\]
and set $m\coloneqq\min\{c,1\}$, so that $0<m\le1$. Since
\[
\langle T^*Tx,x\rangle=\|Tx\|^2\ge c^2\|x\|^2\ge m^2\|x\|^2 \qquad(x\in \mathcal{H}),
\]
we have $T^*T\ge m^2I$. An argument similar to the one used for $M$ gives
\begin{equation}\label{lowerbd}
|T^n|^{1/(n+1)}-|T^{n+1}|^{1/(n+1)}
\le \left(m^{-1/(n+1)}-1\right)|T^{n+1}|^{1/(n+1)}
\le M\left(m^{-1/(n+1)}-1\right)I.
\end{equation}
Combining inequalities~\eqref{upperbd} and~\eqref{lowerbd}, we obtain
\begin{equation}\label{maxbd}
\bigl\| |T^{n+1}|^{1/(n+1)}-|T^n|^{1/(n+1)} \bigr\|
\le M\max\Bigl\{M^{1/(n+1)}-1,m^{-1/(n+1)}-1\Bigr\}.
\end{equation}
The right-hand side of~\eqref{maxbd} tends to $0$ as $n\to\infty$, which completes the proof.
\end{proof}

Lemmas~\ref{convparcomoper} and~\ref{nexpowbb} immediately yield the following result.

\begin{corollary}\label{bblcon}
If $T$ is bounded below, then
\begin{equation}\label{normconv}
\lim_{n\to\infty}\bigl\||T^{n+1}|^{1/(n+1)}-|T^n|^{1/n}\bigr\|=0.
\end{equation}
\end{corollary}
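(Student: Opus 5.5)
The plan is to split the difference by the triangle inequality through the intermediate operator $|T^n|^{1/(n+1)}$:
\[
\bigl\||T^{n+1}|^{1/(n+1)}-|T^n|^{1/n}\bigr\|
\le \bigl\||T^{n+1}|^{1/(n+1)}-|T^n|^{1/(n+1)}\bigr\|
+\bigl\||T^n|^{1/(n+1)}-|T^n|^{1/n}\bigr\|.
\]
Both terms are then controlled by results already established.

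The second term tends to $0$ by Lemma~\ref{convparcomoper}, and this holds for every $T$. The first term tends to $0$ by Lemma~\ref{nexpowbb}, which is exactly where the hypothesis that $T$ is bounded below is used. That lemma rests on the two-sided operator bounds \eqref{upperbd} and \eqref{lowerbd}, which sandwich the difference between multiples of $I$. These multiples vanish as $n\to\infty$, and for self-adjoint operators such a sandwich controls the norm.

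Adding the two limits gives \eqref{normconv}. There is no genuine obstacle, since all the analytic work is contained in the two lemmas.

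The only point worth checking is the norm bound in Lemma~\ref{nexpowbb}. For a self-adjoint $D$ with $-\varepsilon I\le D\le \varepsilon I$, we have $\|D\|\le\varepsilon$. This follows from $\|D\|=\sup_{\|x\|=1}|\langle Dx,x\rangle|$, and it justifies the passage to \eqref{maxbd} that the lemma uses.
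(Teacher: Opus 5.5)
Your proposal is correct and is essentially the paper's proof: the paper obtains the corollary directly from Lemmas~\ref{convparcomoper} and~\ref{nexpowbb} via the same triangle inequality through $|T^n|^{1/(n+1)}$. Your remark that the two-sided sandwich $-\varepsilon I\le D\le\varepsilon I$ gives $\|D\|\le\varepsilon$ for self-adjoint $D$ correctly justifies the passage to \eqref{maxbd}.
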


Clearly, \eqref{normconv} implies
\begin{equation}\label{wotconv}
\operatorname*{WOT-lim}_{n\to\infty}\left(|T^{n+1}|^{1/(n+1)}-|T^n|^{1/n}\right)=0.
\end{equation}

\begin{proposition}\label{boubelcon}
If \eqref{wotconv} holds, then $\mathfrak{C}$ is connected in the $\operatorname{WOT}$.
\end{proposition}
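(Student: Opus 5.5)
The plan is to apply Barone's criterion (Lemma~\ref{barone}) in the compact metric space $(\overline{\operatorname{ball}}_{\|T\|},d)$, where $d$ is the metric \eqref{wotmetric} of Lemma~\ref{wotball}. Set $A_n\coloneqq|T^n|^{1/n}$. Every $A_n$ lies in $\overline{\operatorname{ball}}_{\|T\|}$. By Lemma~\ref{wotball}, this ball is WOT-compact and $d$ metrizes the WOT on it, so the whole sequence sits inside a compact subset of the metric space $(\overline{\operatorname{ball}}_{\|T\|},d)$. By Remark~\ref{welldef}, the cluster set of $\{A_n\}$ computed in $(\overline{\operatorname{ball}}_{\|T\|},d)$ coincides with $\mathfrak C$.

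Consequently, Lemma~\ref{barone} yields that $\mathfrak C$ is connected in the $d$-topology of the ball, provided we verify
\[
\lim_{n\to\infty}d(A_{n+1},A_n)=0 .
\]

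To check this, note that \eqref{wotconv} gives, for each fixed pair $(i,j)$,
\[
\langle (A_{n+1}-A_n)x_i,x_j\rangle\to0 .
\]
Each term of the series defining $d(A_{n+1},A_n)$ is bounded by
\[
2^{-i-j}\|A_{n+1}-A_n\|\le 2^{-i-j}\cdot 2\|T\|,
\]
since the $x_i$ lie in the unit ball. This bound is summable and independent of $n$. Dominated convergence for series (or splitting into a finite head and a small tail) then gives $d(A_{n+1},A_n)\to0$. This is the only step requiring care: the passage from termwise convergence to convergence of the weighted sum depends on the uniform norm bound, which is exactly why we work inside the bounded ball.

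Finally, connectedness transfers to the WOT. The $d$-topology and the WOT agree on $\overline{\operatorname{ball}}_{\|T\|}$, so $\mathfrak C$ is connected as a subspace of $(\overline{\operatorname{ball}}_{\|T\|},\operatorname{WOT})$. Subspace topologies are transitive, so this is the same as the topology $\mathfrak C$ inherits from $(\mathcal B(\mathcal H),\operatorname{WOT})$. Hence $\mathfrak C$ is WOT-connected.
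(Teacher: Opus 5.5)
Your proposal is correct and follows essentially the paper's route: Barone's criterion (Lemma~\ref{barone}) applied in a WOT-compact ball metrized by $d$, followed by transferring connectedness back to the WOT. The only cosmetic difference is how $d(A_{n+1},A_n)\to0$ is obtained: the paper works in $\overline{\operatorname{ball}}_{2\|T\|}$, which contains the differences $A_{n+1}-A_n$, and reads off $d(A_{n+1}-A_n,0)\to0$ from the fact that $d$ metrizes the WOT there, whereas you stay in $\overline{\operatorname{ball}}_{\|T\|}$ and verify the limit directly by dominated convergence.
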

\begin{proof}
By Lemma~\ref{wotball} and Remark~\ref{welldef}, we may regard $\mathfrak C$ as the cluster set of $\{|T^n|^{1/n}\}_{n=1}^{\infty}$ formed in $(\overline{\operatorname{ball}}_{2\|T\|},\operatorname{WOT})$, which is compact and metrized by the metric $d$ defined in \eqref{wotmetric}. The definition of $d$ implies
\[
d\bigl(|T^{n+1}|^{1/(n+1)},|T^n|^{1/n}\bigr)
=d\bigl(|T^{n+1}|^{1/(n+1)}-|T^n|^{1/n},0\bigr).
\]
Condition~\eqref{wotconv} implies that this distance tends to $0$.
Hence, Lemma~\ref{barone} shows that $(\mathfrak{C},\operatorname{WOT})$ is connected.
\end{proof}

Thus, we have the following chain of implications:
\[
T\ \text{bounded below} \ \Longrightarrow\ \eqref{normconv} \ \Longrightarrow\ \eqref{wotconv} \ \Longrightarrow\ \mathfrak C\ \text{is connected in the WOT}.
\]
It is therefore natural to ask whether $\mathfrak C$ is connected in the $\operatorname{WOT}$ for every operator. The answer is no: Example~\ref{wsnbb} provides a counterexample.

\section{Asymptotic Numerical Radius and Hyperinvariant Subspaces}\label{hyperinvsub}

For $x\in \mathcal{H}$, set
\[
w_a(T,x) \coloneqq\max W_a(T,x).
\]
The maximum exists, as Theorem~\ref{Qxcompactinterval} shows. We have
\begin{equation}\label{waequidef}
w_a(T,x) =\max_{A\in\mathfrak C}\langle Ax,x\rangle =\limsup_{n\to\infty}\bigl\langle |T^n|^{1/n}x,x\bigr\rangle.
\end{equation}
Here the first equality follows from Proposition~\ref{WaC}, while the second follows directly from the definitions of $W_a(T,x)$ and the limit superior.

Recall that the asymptotic numerical radius of $T$ is given by
\[
w_a(T) =\sup W_a(T).
\]
Equivalently,
\begin{align}\label{waequiwhy}
w_a(T) =\sup_{\|x\|=1}w_a(T,x) =\sup_{\|x\|=1}\sup_{A\in\mathfrak C}\langle Ax,x\rangle =\sup_{A\in\mathfrak C}\sup_{\|x\|=1}\langle Ax,x\rangle =\sup_{A\in\mathfrak C}\|A\|.
\end{align}
Indeed, the first equality follows from the definitions, the second from \eqref{waequidef}, and the fourth from the positivity of every $A\in\mathfrak C$.

We recall the following standard inequality, which will be used later.

\begin{lemma}[H\"older--McCarthy inequality {\cite[Lemma~2.1]{mccarthy1967Cp}}]\label{hmineq}
If $A\ge0$, $0<\alpha\le1$ and $x\in \mathcal{H}$ is a unit vector, then
\begin{equation}\label{homc}
\langle A^{\alpha}x,x\rangle\le\langle Ax,x\rangle^{\alpha}.
\end{equation}
\end{lemma}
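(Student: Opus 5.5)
We would prove the inequality directly from the continuous functional calculus, using the concavity of $t\mapsto t^{\alpha}$ on $[0,\infty)$ through a tangent-line bound. This avoids spectral measures and Jensen's inequality, although that route would work equally well. The case $\alpha=1$ is an equality, so we assume $0<\alpha<1$. Fix the unit vector $x$ and set $s\coloneqq\langle Ax,x\rangle\ge0$.

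\emph{Case $s>0$.} Since $t\mapsto t^{\alpha}$ is concave and differentiable on $(0,\infty)$ and continuous at $0$, its tangent line at $s$ lies above its graph:
\[
t^{\alpha}\le s^{\alpha}+\alpha s^{\alpha-1}(t-s)\qquad(t\ge0).
\]
Both sides are continuous real functions on $\sigma(A)\subseteq[0,\|A\|]$. A pointwise inequality $f\le g$ on the spectrum of a self-adjoint operator gives $f(A)\le g(A)$, so
\[
A^{\alpha}\le s^{\alpha}I+\alpha s^{\alpha-1}(A-sI).
\]
We pair with $x$ and use $\|x\|=1$. The last term becomes $\alpha s^{\alpha-1}(\langle Ax,x\rangle-s)=0$, which leaves $\langle A^{\alpha}x,x\rangle\le s^{\alpha}=\langle Ax,x\rangle^{\alpha}$.

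\emph{Case $s=0$.} Here $\|A^{1/2}x\|^2=\langle Ax,x\rangle=0$, so $A^{1/2}x=0$. Every continuous function $f$ with $f(0)=0$ can be uniformly approximated on $\sigma(A)$ by polynomials in $A^{1/2}$ without constant term. Hence $f(A)x=0$ for all such $f$, and in particular $A^{\alpha}x=0$. Both sides of the inequality are then $0$.

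The only step that needs any care is the transfer of the scalar inequality to the operator inequality. This is the standard order-preserving property of the continuous functional calculus for self-adjoint operators: if $g-f\ge0$ on the spectrum, then $g(A)-f(A)=(g-f)(A)$ is positive. We also have to treat the degenerate case $s=0$ separately, because the tangent-line bound breaks down there: the slope $\alpha s^{\alpha-1}$ is undefined at $s=0$. Everything else is immediate.
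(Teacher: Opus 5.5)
Your argument is correct. The paper gives no proof of this lemma and simply cites McCarthy. The standard argument behind that reference uses the spectral theorem: let $\mu_x$ be the spectral measure of $A$ at the unit vector $x$, a probability measure on $\sigma(A)\subseteq[0,\infty)$. Then $\langle A^{\alpha}x,x\rangle=\int t^{\alpha}\,d\mu_x(t)\le\bigl(\int t\,d\mu_x(t)\bigr)^{\alpha}=\langle Ax,x\rangle^{\alpha}$ by H\"older's inequality with exponents $1/\alpha$ and $1/(1-\alpha)$, or equivalently by Jensen's inequality.

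Your tangent-line bound is the usual proof of Jensen's inequality for a concave function. You carry it out at the operator level, using the order-preserving continuous functional calculus, before pairing with $x$. So the two routes rest on the same idea.

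Your version needs only the continuous functional calculus and the positivity of $(g-f)(A)$ when $g\ge f$ on $\sigma(A)$; no spectral measures are required. The cost is the separate degenerate case $s=0$. You handle it correctly via $A^{1/2}x=0$ and approximation by polynomials without constant term, which amounts to $\ker A=\ker A^{\alpha}$. The H\"older route, once the spectral theorem is available, is a one-line argument with no case split, since H\"older's inequality holds for all nonnegative integrands, including when $\int t\,d\mu_x=0$.
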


It is classical (see, for example, \cite[Proposition~1.5.1(d)]{wu2021Numerical}) that
\begin{equation}\label{comasynorm}
r(T)\le w(T)\le\|T\|.
\end{equation}
The next proposition complements these inequalities with a comparison for the asymptotic numerical radius, together with its local version.

\begin{proposition}\label{impoineq}
For every unit vector $x\in\mathcal{H}$,
\begin{equation}\label{waxlrx}
w_a(T,x)\le r(T,x).
\end{equation}
Consequently,
\begin{equation}\label{walr}
w_a(T)\le r(T).
\end{equation}
\end{proposition}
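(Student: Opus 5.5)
We apply the H\"older--McCarthy inequality (Lemma~\ref{hmineq}) to the positive operator $A=|T^n|^2$ with exponent $\alpha=1/(2n)\in(0,1]$. For a unit vector $x$ this gives
\[
\bigl\langle |T^n|^{1/n}x,x\bigr\rangle=\bigl\langle (|T^n|^2)^{1/(2n)}x,x\bigr\rangle\le\bigl\langle |T^n|^2x,x\bigr\rangle^{1/(2n)}=\bigl(\|T^nx\|^2\bigr)^{1/(2n)}=\|T^nx\|^{1/n}.
\]
The middle equality uses $\langle |T^n|^2x,x\rangle=\langle (T^n)^*T^nx,x\rangle=\|T^nx\|^2$.

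Taking $\limsup_{n\to\infty}$ on both sides and using \eqref{waequidef}, we get
\[
w_a(T,x)=\limsup_{n\to\infty}\bigl\langle |T^n|^{1/n}x,x\bigr\rangle\le\limsup_{n\to\infty}\|T^nx\|^{1/n}=r(T,x),
\]
which is \eqref{waxlrx}.

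For \eqref{walr}, note that $\|T^nx\|^{1/n}\le\|T^n\|^{1/n}$ for a unit vector $x$. Gelfand's spectral radius formula $\lim_n\|T^n\|^{1/n}=r(T)$ then yields $r(T,x)\le r(T)$. Taking the supremum over unit vectors $x$ and using the first equality in \eqref{waequiwhy} gives $w_a(T)=\sup_{\|x\|=1}w_a(T,x)\le r(T)$.

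The only step needing care is choosing the exponent correctly, so that $(|T^n|^2)^{1/(2n)}=|T^n|^{1/n}$ via the continuous functional calculus. Since $\alpha=1/(2n)\le1$, Lemma~\ref{hmineq} applies directly. The rest is routine.
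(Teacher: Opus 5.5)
Your proof is correct and follows essentially the paper's route. The paper applies the H\"older--McCarthy inequality to $|T^n|$ with exponent $1/n$ and then uses Cauchy--Schwarz, $\langle |T^n|x,x\rangle\le\||T^n|x\|=\|T^nx\|$; you apply it to $|T^n|^2$ with exponent $1/(2n)$, which reaches $\|T^nx\|^{1/n}$ in one step, and your explicit derivation of $r(T,x)\le r(T)$ from $\|T^nx\|\le\|T^n\|$ and Gelfand's formula supplies a step the paper leaves implicit.
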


\begin{proof}
Since $\||T^n|x\|=\|T^nx\|$, the H\"older--McCarthy inequality \eqref{homc} and the Cauchy--Schwarz inequality give
\begin{equation}\label{ineqchain}
\langle |T^n|^{1/n}x,x\rangle\le \langle |T^n|x,x\rangle^{1/n}\le \|T^nx\|^{1/n}\qquad(n\ge1).
\end{equation}
Taking the limit superior in \eqref{ineqchain} yields \eqref{waxlrx}. It follows from \eqref{waequiwhy} that
\[
w_a(T) =\sup_{\|x\|=1}w_a(T,x)\le \sup_{\|x\|=1} r(T,x)\le r(T).
\]
\end{proof}

We next recall the classical construction of hyperinvariant subspaces from local spectral theory and record its subsequential version.

Let $\Gamma\coloneqq\{n_j\}_{j=1}^{\infty}$ be a strictly increasing sequence of positive integers. For $x\in\mathcal{H}$, define
\[
r^\Gamma(T,x)\coloneqq\limsup_{j\to\infty}\|T^{n_j}x\|^{1/n_j}.
\]
As in the case of the local spectral radius, $r^\Gamma(T,\cdot)$ has the following properties for all $x,y\in\mathcal{H}$:
\begin{enumerate}
\item $r^\Gamma(T,\alpha x)=r^\Gamma(T,x)$ for every $\alpha\in\mathbb C\setminus\{0\}$;
\item $r^\Gamma(T,x+y)\le\max\{r^\Gamma(T,x),r^\Gamma(T,y)\}$;
\item $r^\Gamma(T,Sx)\le r^\Gamma(T,x)$ for every $S\in\mathcal{B}(\mathcal{H})$ commuting with $T$.
\end{enumerate}
The above shows that, for every $\tau\ge0$,
\[
M_\tau^\Gamma(T)\coloneqq\{x\in\mathcal{H}:r^\Gamma(T,x)\le\tau\}
\]
is a hyperinvariant linear manifold for $T$. Consequently, $\overline{M_\tau^\Gamma(T)}$ is a hyperinvariant subspace of $T$. When $\Gamma=\{n\}_{n=1}^{\infty}$, $r^\Gamma(T,x)$ and $M_\tau^\Gamma(T)$ are precisely $r(T,x)$ and $M_\tau(T)$, respectively.

\begin{lemma}\label{ynotin}
Let $A\in\mathfrak C$, and let $\Gamma\coloneqq\{n_j\}_{j=1}^{\infty}$ be a strictly increasing sequence such that
\[
|T^{n_j}|^{1/n_j}\xrightarrow{\operatorname{WOT}}A.
\]
If $\tau\ge0$ and $y_0\in\mathcal{H}$ is a unit vector satisfying
\[
\langle Ay_0,y_0\rangle>\tau,
\]
then
\[
y_0\notin\overline{M_\tau^\Gamma(T)}.
\]
\end{lemma}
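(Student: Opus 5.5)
Suppose, for contradiction, that $y_0\in\overline{M_\tau^\Gamma(T)}$. The idea is to combine the inequality chain \eqref{ineqchain} from the proof of Proposition~\ref{impoineq} with the continuity of the quadratic form of $A$. We cannot apply \eqref{ineqchain} directly to $y_0$, since $y_0$ need not lie in $M_\tau^\Gamma(T)$. So we first approximate $y_0$ by vectors $y\in M_\tau^\Gamma(T)$. For each such $y$, we then estimate $\langle |T^{n_j}|^{1/n_j}y_0,y_0\rangle$ in terms of $\langle |T^{n_j}|^{1/n_j}y,y\rangle$.

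\textbf{Step 1 (the set $M_\tau^\Gamma(T)$ pushes the form of $A$ below $\tau$).} Let $y\in M_\tau^\Gamma(T)$ with $y\neq0$, and put $u=y/\|y\|$. Then $r^\Gamma(T,u)=r^\Gamma(T,y)\le\tau$. Apply \eqref{ineqchain} to $u$ and $n=n_j$, and take $\limsup_j$. Since $|T^{n_j}|^{1/n_j}\to A$ in the WOT, this gives
\[
\langle Au,u\rangle=\lim_{j}\langle |T^{n_j}|^{1/n_j}u,u\rangle\le\limsup_j\|T^{n_j}u\|^{1/n_j}=r^\Gamma(T,u)\le\tau .
\]
Hence $\langle Ay,y\rangle\le\tau\|y\|^2$ for every $y\in M_\tau^\Gamma(T)$; the case $y=0$ is trivial.

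\textbf{Step 2 (passing to the closure).} The map $y\mapsto\langle Ay,y\rangle-\tau\|y\|^2$ is norm continuous, because $A$ is bounded. By Step 1 it is $\le0$ on $M_\tau^\Gamma(T)$, so it is $\le 0$ on $\overline{M_\tau^\Gamma(T)}$ as well. Evaluating at the unit vector $y_0$ gives $\langle Ay_0,y_0\rangle\le\tau$. This contradicts the hypothesis $\langle Ay_0,y_0\rangle>\tau$, so $y_0\notin\overline{M_\tau^\Gamma(T)}$.

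\textbf{Where the care is needed.} The only delicate point is the normalisation in Step 1. The Hölder--McCarthy inequality \eqref{homc} requires a unit vector, and $r^\Gamma(T,\cdot)$ is scale invariant by property (1), while the quadratic form is homogeneous of degree $2$. Working with $u=y/\|y\|$ and then multiplying by $\|y\|^2$ handles this. Note that we use genuine WOT convergence along $\Gamma$, not only a cluster point. This lets the $\limsup$ of the left-hand side equal $\langle Au,u\rangle$ exactly.
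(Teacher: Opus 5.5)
Your proof is correct and is essentially the paper's argument. Both apply \eqref{ineqchain} along $\Gamma$ to unit vectors of $M_\tau^\Gamma(T)$ to get $\langle Au,u\rangle\le\tau$, and then pass to the closure using norm continuity of the quadratic form of $A$; your homogenized inequality $\langle Ay,y\rangle\le\tau\|y\|^2$ just spells out the renormalization that the paper's phrase ``by continuity, the same inequality holds for every unit vector in $\overline{M_\tau^\Gamma(T)}$'' leaves implicit.
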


\begin{proof}
Let $u\in M_\tau^\Gamma(T)$ be a unit vector. By \eqref{ineqchain} and the WOT convergence,
\[
\langle Au,u\rangle=\lim_{j\to\infty}\langle |T^{n_j}|^{1/n_j}u,u\rangle\le\limsup_{j\to\infty}\|T^{n_j}u\|^{1/n_j}=r^\Gamma(T,u)\le\tau.
\]
By continuity, the same inequality holds for every unit vector in $\overline{M_\tau^\Gamma(T)}$. Since $\langle Ay_0,y_0\rangle>\tau$, the conclusion follows.
\end{proof}

The next two results give two criteria ensuring that $\overline{M_\tau^\Gamma(T)}$ is nontrivial.

\begin{proposition}\label{master}
Suppose that there exist a nonzero vector $x_0\in\mathcal{H}$ and a unit vector $y_0\in\mathcal{H}$ such that
\[
\limsup_{n\to\infty} \left( \langle |T^n|^{1/n}y_0,y_0\rangle-\|T^nx_0\|^{1/n} \right)>0.
\]
Then $T$ has a nontrivial hyperinvariant subspace that contains $x_0$ but does not contain $y_0$.
\end{proposition}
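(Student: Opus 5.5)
The plan is to pick a subsequence $\Gamma$ along which the positive gap is realized, pass to a further subsequence on which $|T^{n}|^{1/n}$ converges in the WOT, and then use the hyperinvariant subspace $\overline{M_\tau^\Gamma(T)}$ for a suitable threshold $\tau$. Lemma~\ref{ynotin} keeps $y_0$ out of this subspace, and the definition of $r^\Gamma$ puts $x_0$ in it.

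\textbf{Choosing $\Gamma$ and $A$.} Let $\delta>0$ be the limsup in the hypothesis, allowing $\delta$ to be a positive number below the limsup if needed. Choose a strictly increasing sequence $\{n_j\}$ with
\[
\langle |T^{n_j}|^{1/n_j}y_0,y_0\rangle-\|T^{n_j}x_0\|^{1/n_j}>\delta/2
\]
for all $j$. Both terms lie in $[0,\|T\|]$. By Lemma~\ref{wotball}, after passing to a further subsequence (still denoted $\{n_j\}$) we may assume:
\begin{itemize}
\item $|T^{n_j}|^{1/n_j}\xrightarrow{\operatorname{WOT}}A$ for some $A\in\mathfrak C$;
\item $\|T^{n_j}x_0\|^{1/n_j}\to\tau$ for some $\tau\in[0,\|T\|]$.
\end{itemize}
Put $\Gamma\coloneqq\{n_j\}_{j=1}^\infty$. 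Then $r^\Gamma(T,x_0)=\tau$, because the limsup along $\Gamma$ is now a genuine limit. Letting $j\to\infty$ in the displayed inequality gives
\[
\langle Ay_0,y_0\rangle\ge\tau+\delta/2>\tau .
\]

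\textbf{The subspace.} Take $\mathcal M\coloneqq\overline{M_\tau^\Gamma(T)}$. As recorded before Lemma~\ref{ynotin}, this is a hyperinvariant subspace of $T$. Since $r^\Gamma(T,x_0)=\tau$, we have $x_0\in\mathcal M$, so $\mathcal M\neq\{0\}$ because $x_0\ne0$. By Lemma~\ref{ynotin}, applied with this $A$, $\Gamma$, $\tau$ and $y_0$, we get $y_0\notin\mathcal M$, so $\mathcal M\neq\mathcal H$. Hence $\mathcal M$ is nontrivial, contains $x_0$, and omits $y_0$.

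\textbf{Main obstacle.} The only subtle point is the order of subsequence extraction. The threshold $\tau$ must equal $r^\Gamma(T,x_0)$ along the same $\Gamma$ on which $|T^{n}|^{1/n}$ converges to $A$. The original limsup of $\|T^nx_0\|^{1/n}$ could be larger than $\langle Ay_0,y_0\rangle$, so it cannot serve as $\tau$. Passing to the subsequence first, and only then defining $\tau$ as the subsequential limit, is what keeps the strict inequality $\langle Ay_0,y_0\rangle>\tau$ intact.
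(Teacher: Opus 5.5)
Your proposal is correct and follows essentially the same route as the paper: you extract a subsequence realizing the positive gap, refine it so that $|T^{n_j}|^{1/n_j}$ converges in the WOT to some $A\in\mathfrak C$ and $\|T^{n_j}x_0\|^{1/n_j}$ converges to $\tau=r^\Gamma(T,x_0)$, and then apply Lemma~\ref{ynotin} to $\overline{M_\tau^\Gamma(T)}$. One harmless slip: $\|T^nx_0\|^{1/n}$ need not lie in $[0,\|T\|]$ when $\|x_0\|>1$, but it is bounded by $\|T\|\max\{1,\|x_0\|\}$, which is all the compactness step needs, and the limit $\tau$ does lie in $[0,\|T\|]$.
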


\begin{proof}
Set
\[
A_n\coloneqq|T^n|^{1/n},\qquad a_n\coloneqq\|T^nx_0\|^{1/n},\qquad b_n\coloneqq\langle A_ny_0,y_0\rangle.
\]
Choose a strictly increasing sequence $\Gamma\coloneqq\{n_j\}_{j=1}^{\infty}$ such that
\[
b_{n_j}-a_{n_j}\longrightarrow\limsup_{n\to\infty}\left(b_n-a_n\right)>0.
\]
The sequences $\{\|A_{n_j}\|\}_{j=1}^{\infty}$, $\{a_{n_j}\}_{j=1}^{\infty}$ and $\{b_{n_j}\}_{j=1}^{\infty}$ are bounded. Hence, by Lemma~\ref{wotball} and the Bolzano--Weierstrass theorem, after passing to a further subsequence and replacing $\Gamma$ by it, we may assume that, for some $A\in\mathfrak C$ and some $\alpha,\beta\ge0$,
\[
A_{n_j}\xrightarrow{\operatorname{WOT}}A,\qquad a_{n_j}\to\alpha,\qquad b_{n_j}\to\beta.
\]
It follows that
\[
r^\Gamma(T,x_0)
=\lim_{j\to\infty}\|T^{n_j}x_0\|^{1/n_j}
=\alpha
<\beta
=\lim_{j\to\infty}\langle A_{n_j}y_0,y_0\rangle
=\langle Ay_0,y_0\rangle.
\]
Lemma~\ref{ynotin}, with $\tau=\alpha$, gives $y_0\notin\overline{M_\alpha^\Gamma(T)}$, whereas $0\ne x_0\in M_\alpha^\Gamma(T)$. Thus $\overline{M_\alpha^\Gamma(T)}$ is the desired nontrivial hyperinvariant subspace.
\end{proof}

\begin{theorem}\label{nontriinv}
If there exists a nonzero vector $x_0\in \mathcal{H}$ such that
\[
r(T,x_0)<w_a(T),
\]
then $\overline{M_{r(T,x_0)}(T)}$ is a nontrivial hyperinvariant subspace of $T$ containing $x_0$, and the closed cyclic subspace $\overline{\operatorname{span}\{T^n x_0:n\ge0\}}$ generated by $x_0$ under $T$ is a nontrivial invariant subspace of $T$.
\end{theorem}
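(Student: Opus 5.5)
Set $\tau\coloneqq r(T,x_0)$. Since $\tau<w_a(T)=\sup_{A\in\mathfrak C}\|A\|$ by \eqref{waequiwhy}, and since each $A\in\mathfrak C$ is positive, I will choose $A\in\mathfrak C$ and a unit vector $y_0$ with $\langle Ay_0,y_0\rangle>\tau$. By Remark~\ref{welldef} there is a strictly increasing sequence $\Gamma=\{n_j\}$ with $|T^{n_j}|^{1/n_j}\to A$ in the WOT.

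\emph{Hyperinvariant subspace.} The subspace $\overline{M_\tau(T)}$ is hyperinvariant by the discussion preceding Lemma~\ref{ynotin} (take $\Gamma=\{n\}$). It contains $x_0\neq0$ by definition, so it is nonzero. For properness, note that $r^\Gamma(T,x)\le r(T,x)$ for every $x$, because a limsup along a subsequence is at most the full limsup. Hence $M_\tau(T)\subseteq M_\tau^\Gamma(T)$, and therefore $\overline{M_\tau(T)}\subseteq\overline{M_\tau^\Gamma(T)}$. Lemma~\ref{ynotin} gives $y_0\notin\overline{M_\tau^\Gamma(T)}$, so $y_0\notin\overline{M_\tau(T)}$, and the subspace is proper.

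\emph{Cyclic invariant subspace.} The subspace $\mathcal K\coloneqq\overline{\operatorname{span}\{T^nx_0:n\ge0\}}$ is clearly invariant and contains $x_0\ne0$. To show $\mathcal K\ne\mathcal H$, it suffices to prove $\mathcal K\subseteq\overline{M_\tau(T)}$; properness then follows from the previous step. Since $M_\tau(T)$ is a linear manifold (properties (1)–(2) with $\Gamma=\{n\}$), it suffices to check $T^kx_0\in M_\tau(T)$ for each $k$. This holds because $T^k$ commutes with $T$, so property (3) gives $r(T,T^kx_0)\le r(T,x_0)=\tau$. Taking spans and closures yields $\mathcal K\subseteq\overline{M_\tau(T)}\ne\mathcal H$.

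The only delicate point is the comparison between the full local spectral radius and its subsequential version, namely that the hyperinvariant manifold defined via $r$ sits inside the one defined via $r^\Gamma$. This is exactly what allows Lemma~\ref{ynotin}, which is stated for $\Gamma$, to exclude $y_0$. Everything else is bookkeeping with the earlier results.
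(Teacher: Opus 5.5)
Your proof is correct and follows essentially the same route as the paper: you take a WOT-convergent subsequence $\Gamma$ with limit $A\in\mathfrak C$ and a unit vector $y_0$ with $\langle Ay_0,y_0\rangle>\tau$. You then use $M_\tau(T)\subseteq M_\tau^\Gamma(T)$ together with Lemma~\ref{ynotin} to exclude $y_0$, and place the cyclic subspace inside $\overline{M_\tau(T)}$. The differences are cosmetic: the paper first picks $y_0$ with $w_a(T,y_0)>\tau$ and then extracts $A$, and it gets the cyclic inclusion from the $T$-invariance of the closed subspace $\overline{M_\tau(T)}$ rather than from property (3) applied to each $T^kx_0$.
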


\begin{proof}
Set $\tau\coloneqq r(T,x_0)$. By \eqref{waequiwhy}, there exists a unit vector $y_0\in\mathcal{H}$ such that $w_a(T,y_0)>\tau$. Hence, by \eqref{waequidef},
\[
\limsup_{n\to\infty}\langle |T^n|^{1/n}y_0,y_0\rangle=w_a(T,y_0)>\tau.
\]
Choose a strictly increasing sequence $\Gamma\coloneqq\{n_j\}_{j=1}^{\infty}$ such that
\[
\langle |T^{n_j}|^{1/n_j}y_0,y_0\rangle\to w_a(T,y_0).
\]
By Lemma~\ref{wotball}, after passing to a further subsequence and replacing $\Gamma$ by it, we may assume that
\[
|T^{n_j}|^{1/n_j}\xrightarrow{\operatorname{WOT}}A
\]
for some $A\in\mathfrak C$. Then $\langle Ay_0,y_0\rangle=w_a(T,y_0)>\tau$, so Lemma~\ref{ynotin} gives
\[
y_0\notin\overline{M_\tau^\Gamma(T)}.
\]
Since $r^\Gamma(T,x)\le r(T,x)$ for every $x\in\mathcal{H}$,
\[
M_\tau(T)\subseteq M_\tau^\Gamma(T).
\]
Hence $y_0\notin\overline{M_\tau(T)}$, so $\overline{M_\tau(T)}\ne\mathcal{H}$. On the other hand, $0\ne x_0\in M_\tau(T)$. Thus $\overline{M_\tau(T)}$ is a nontrivial hyperinvariant subspace of $T$ containing $x_0$.

Finally, $\overline{M_\tau(T)}$ is closed, is invariant under $T$, and contains $x_0$; hence it contains
\[
\overline{\operatorname{span}\{T^n x_0:n\ge0\}},
\]
which is therefore a nontrivial invariant subspace of $T$.
\end{proof}

Recall from the Introduction that Bourdon's theorem states that if $T$ is hyponormal and there exists a nonzero vector $x_0\in\mathcal{H}$ such that
\[
r(T,x_0)<\|T\|,
\]
then $T$ has a nontrivial hyperinvariant subspace. On the other hand, \eqref{comasynorm} and \eqref{walr} imply
\[
w_a(T)\le r(T)\le w(T)\le \|T\|.
\]
We will prove below that the above inequalities become equalities when $T$ is hyponormal, thereby recovering Bourdon's theorem as a special case of Theorem~\ref{nontriinv}.

The key ingredient is the following monotonicity criterion.

\begin{proposition}\label{hyponequi}
Suppose that $\{|T^n|^{1/n}\}_{n=1}^{\infty}$ has a subsequence
\[
\{|T^{n_j}|^{1/n_j}\}_{j=1}^{\infty}
\]
which is an increasing operator sequence. Then
\[
w_a(T)=r(T).
\]
If, in addition, $n_1=1$, then
\[
w_a(T)=r(T)=w(T)=\|T\|.
\]
\end{proposition}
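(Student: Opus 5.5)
We need $w_a(T)\ge r(T)$, since Proposition~\ref{impoineq} already gives $w_a(T)\le r(T)$. Set $B_j\coloneqq|T^{n_j}|^{1/n_j}$. Every $B_j$ lies in $\overline{\operatorname{ball}}_{\|T\|}$, and the sequence is increasing. An increasing, norm-bounded sequence of self-adjoint operators converges in the SOT, hence in the WOT, to some positive operator $A$. Then $A$ is a WOT cluster point of the full sequence, so $A\in\mathfrak C$, and by \eqref{waequiwhy} we get $w_a(T)\ge\|A\|$.

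So it suffices to show $\|A\|\ge r(T)$. Since $B_j\le A$ for every $j$ (monotone limits dominate their terms), and all are positive, we get $\|B_j\|\le\|A\|$. Now
\[
\|B_j\|=\bigl\||T^{n_j}|\bigr\|^{1/n_j}=\|T^{n_j}\|^{1/n_j},
\]
using the functional calculus for the positive operator $|T^{n_j}|$ and $\||S|\|=\|S\|$. By Gelfand's spectral radius formula, $\|T^{n_j}\|^{1/n_j}\to r(T)$. Hence $\|A\|\ge\lim_j\|B_j\|=r(T)$, which gives $w_a(T)\ge r(T)$ and therefore $w_a(T)=r(T)$.

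If in addition $n_1=1$, then $B_1=|T|\le A$, so $\|T\|=\||T|\|\le\|A\|\le w_a(T)$. Combining with $w_a(T)\le r(T)\le w(T)\le\|T\|$ from \eqref{walr} and \eqref{comasynorm}, all four quantities are equal.

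The only mildly delicate point is justifying that $B_j\le A$ implies $\|B_j\|\le\|A\|$ for positive operators. This holds because $\|B\|=\sup_{\|x\|=1}\langle Bx,x\rangle$ for $B\ge0$, so the order on quadratic forms transfers to norms. The monotone convergence step (Vigier's theorem) is standard. Alternatively, one can avoid it: take any WOT cluster point $A$ of $\{B_j\}$ via Lemma~\ref{wotball}, and observe that $\langle B_kx,x\rangle\le\langle B_jx,x\rangle$ for $j\ge k$ passes to the limit, giving $B_k\le A$ directly.
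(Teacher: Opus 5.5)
Your proof is correct and follows the paper's argument essentially step for step. You use monotone (Vigier) convergence of the increasing subsequence to some $A\in\mathfrak C$, then the bound $\|A\|\ge\|T^{n_j}\|^{1/n_j}\to r(T)$ together with \eqref{waequiwhy} and \eqref{walr}, and finally $A\ge|T|$ when $n_1=1$ to collapse the chain $w_a(T)\le r(T)\le w(T)\le\|T\|$; your alternative via an arbitrary WOT cluster point of $\{B_j\}$ from Lemma~\ref{wotball} is also valid and merely avoids citing the monotone convergence theorem.
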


\begin{proof}
The increasing sequence $\{|T^{n_j}|^{1/n_j}\}_{j=1}^{\infty}$ is bounded above by $\|T\|I$ and hence, by \cite[Lemma~I.6.4]{davidson1996CAlgebras}, converges in the $\operatorname{SOT}$ to some positive operator $A$. Consequently,
\[
|T^{n_j}|^{1/n_j}\to A\qquad(\operatorname{WOT}),
\]
so $A\in\mathfrak C$. Moreover, $A\ge |T^{n_j}|^{1/n_j}$ for every $j\ge1$, and hence $\|A\|\ge\|T^{n_j}\|^{1/n_j}$. Therefore, by \eqref{waequiwhy} and the spectral radius formula,
\[
w_a(T)\ge\|A\|\ge\lim_{j\to\infty}\|T^{n_j}\|^{1/n_j}=r(T).
\]
The reverse inequality follows from \eqref{walr}.

If $n_1=1$, then $A\ge |T|$. Combining the above with \eqref{comasynorm}, we obtain
\[
r(T)=w_a(T)\ge\|A\|\ge\|T\|\ge w(T) \ge r(T).
\]
\end{proof}

We now identify several generalizations of hyponormal operators that satisfy the monotonicity hypothesis of Proposition~\ref{hyponequi}. Following Fujii and Nakatsu \cite{fujii1975Subclasses} and Aluthge \cite{aluthge1990Hyponormal}, for any $p>0$, an operator $S\in\mathcal{B}(\mathcal{H})$ is said to be \emph{$p$-hyponormal} if
\[
(S^*S)^p\ge(SS^*)^p,
\]
where the case $p=1$ is precisely hyponormality; when $p>1$, the L\"owner--Heinz inequality \eqref{lhe} reduces the situation to $p=1$, so it suffices for our purposes to consider $0<p\le1$. Tanahashi \cite{tanahashi1999Loghyponormal} introduced log-hyponormal operators: an invertible operator $S$ is said to be \emph{log-hyponormal} if
\[
\log(S^*S)\ge\log(SS^*).
\]

Furuta, Ito, and Yamazaki~\cite{furuta1998Subclass} introduced \emph{class $\mathbf{A}$}: an operator $S$ belongs to class $\mathbf{A}$ if
\begin{equation}\label{classA}
|S^2|\ge |S|^2.
\end{equation}
They also proved that every log-hyponormal operator belongs to class $\mathbf{A}$. Yamazaki~\cite[Theorems~1 and~2]{yamazaki1999Extensions} subsequently obtained the stronger result that every $p$-hyponormal or log-hyponormal operator satisfies
\begin{equation}\label{wholechain}
|S|^2\le |S^2|\le\cdots\le |S^n|^{2/n}\le |S^{n+1}|^{2/(n+1)}\le\cdots.
\end{equation}
In particular, every $p$-hyponormal operator also belongs to class $\mathbf{A}$. It was later shown that the defining inequality \eqref{classA} already implies the entire chain \eqref{wholechain}; see Ito~\cite[Theorem~1]{ito2002Classes}. This is precisely what enables us to apply Proposition~\ref{hyponequi} to any operator belonging to class $\mathbf{A}$.

\begin{corollary}\label{phyponormal}
Let $k_0$ be a positive integer and set $S\coloneqq T^{k_0}$. If $S$ belongs to class $\mathbf{A}$, then
\[
w_a(T)=r(T).
\]
Moreover, if $k_0=1$, then
\[
w_a(T)=r(T)=w(T)=\|T\|
\]
and $\mathfrak C$ is a singleton.
\end{corollary}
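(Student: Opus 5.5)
By Ito's theorem, $S=T^{k_0}\in\mathbf A$ gives the full chain \eqref{wholechain} for $S$:
\[
|S^m|^{2/m}\le|S^{m+1}|^{2/(m+1)}\qquad(m\ge1).
\]
Applying the L\"owner--Heinz inequality \eqref{lhe} with $\alpha=1/2$ (which preserves order) yields
\[
|S^m|^{1/m}\le|S^{m+1}|^{1/(m+1)}.
\]

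Since $S^m=T^{k_0m}$, we have
\[
|S^m|^{1/m}=\bigl(|T^{k_0m}|^{1/(k_0m)}\bigr)^{k_0}.
\]
So the sequence $B_m\coloneqq|T^{k_0m}|^{1/(k_0m)}$ satisfies $B_m^{k_0}\le B_{m+1}^{k_0}$. Applying Löwner--Heinz again, now with exponent $1/k_0$, gives $B_m\le B_{m+1}$. Thus the subsequence $n_m=k_0m$ of $\{|T^n|^{1/n}\}$ is increasing, and Proposition~\ref{hyponequi} gives $w_a(T)=r(T)$. If $k_0=1$, then $n_1=1$, and the second part of Proposition~\ref{hyponequi} gives $w_a(T)=r(T)=w(T)=\|T\|$.

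The remaining claim is that $\mathfrak C$ is a singleton when $k_0=1$. In that case the whole sequence $A_n\coloneqq|T^n|^{1/n}$ is increasing and bounded above by $\|T\|I$. Hence it converges in the SOT, and therefore in the WOT, to some $A$, by \cite[Lemma~I.6.4]{davidson1996CAlgebras}. By Remark~\ref{welldef}, $\mathfrak C=\{A\}$. No real obstacle remains; the only care needed is that every power applied to an operator inequality has exponent in $(0,1]$ (namely $1/2$ and $1/k_0$), so Löwner--Heinz applies at each step.
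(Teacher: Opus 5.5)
Your proposal is correct and follows the paper's proof essentially step for step. You use Ito's chain for $S$ and then L\"owner--Heinz to make $\{|T^{k_0m}|^{1/(k_0m)}\}_{m\ge1}$ increasing; you merely split the paper's single exponent $1/(2k_0)$ into $1/2$ followed by $1/k_0$. You then apply Proposition~\ref{hyponequi}, and for $k_0=1$ the monotone SOT-convergence argument from its proof shows that $\mathfrak C$ is a singleton.
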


\begin{proof}
By the preceding discussion, the sequence $\{|S^n|^{2/n}\}_{n=1}^{\infty}$ is increasing. Hence, by the L\"owner--Heinz inequality \eqref{lhe} with $\alpha=1/(2k_0)$, the sequence
\[
\left\{|T^{nk_0}|^{1/(nk_0)}\right\}_{n=1}^{\infty}
\]
is increasing. Proposition~\ref{hyponequi} gives $w_a(T)=r(T)$.

If $k_0=1$, the additional conclusion of Proposition~\ref{hyponequi} gives the remaining equalities, while its proof shows that $\{|T^n|^{1/n}\}_{n=1}^{\infty}$ converges in the $\operatorname{WOT}$. Hence, $\mathfrak C$ is a singleton.
\end{proof}

\section{Examples}\label{exam}

We recall the basic computation for (unilateral) weighted shifts. Let $\{\xi_k\}_{k=1}^{\infty}$ be an orthonormal basis for $\mathcal{H}$, and let $\{w_k\}_{k=1}^{\infty}$ be a bounded scalar sequence. Define the weighted shift $T$ on $\mathcal{H}$ with weight sequence $\{w_k\}_{k=1}^{\infty}$ by
\[
T\xi_k=w_k\xi_{k+1}\qquad(k\ge1).
\]
By \cite[Corollary~1]{shields1974Weighted}, every weighted shift with weight sequence $\{w_k\}_{k=1}^{\infty}$ is unitarily equivalent to the weighted shift with weight sequence $\{|w_k|\}_{k=1}^{\infty}$. Thus, for the weighted shifts considered below, we assume that $w_k\ge0$ for every $k\ge1$.

For $A_n\coloneqq |T^n|^{1/n}$, a direct computation shows that $A_n$ is diagonal with respect to $\{\xi_k\}_{k=1}^{\infty}$,
\[
A_n=\operatorname{diag}(a_1^{(n)},a_2^{(n)},\ldots),
\]
where
\[
a_k^{(n)}\coloneqq(w_k w_{k+1}\cdots w_{k+n-1})^{1/n}\qquad(k,n\ge1).
\]
We retain the above notation throughout the remainder of this section whenever weighted shifts are considered.

The following proposition completely describes $\mathfrak C$ for bounded below weighted shifts.

\begin{proposition}\label{weishi}
Let $T$ be a bounded below weighted shift with weight sequence $\{w_k\}_{k=1}^{\infty}$, and set
\[
J\coloneqq[\liminf_{n\to\infty}a_1^{(n)},\,\limsup_{n\to\infty}a_1^{(n)}].
\]
Then $\mathfrak C=\{sI:s\in J\}$, and $W_a(T,x)=W_a(T)=J$ for every unit vector $x\in \mathcal{H}$.
\end{proposition}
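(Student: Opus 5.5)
The plan is to exploit the bounded-below hypothesis to show that, for each fixed $k$, the diagonal entries $a_k^{(n)}$ and $a_1^{(n)}$ become asymptotically equal. Then every WOT cluster point of the diagonal operators $A_n$ is a scalar multiple of the identity. Let $c\coloneqq\inf_k w_k>0$ (bounded below for a weighted shift means exactly $\inf_k w_k>0$) and $M\coloneqq\sup_k w_k<\infty$.

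For fixed $k\ge2$ and $n\ge k$, compare the two products directly:
\[
\frac{a_k^{(n)}}{a_1^{(n)}}=\left(\frac{w_{n+1}\cdots w_{n+k-1}}{w_1\cdots w_{k-1}}\right)^{1/n}\in\left[(c/M)^{(k-1)/n},(M/c)^{(k-1)/n}\right].
\]
This tends to $1$ as $n\to\infty$. Since $a_1^{(n)}\le M$, we get $a_k^{(n)}-a_1^{(n)}\to0$ for every fixed $k$.

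Now let $A\in\mathfrak C$, so that $A_{n_j}\to A$ in WOT along some subsequence. Passing to a further subsequence, assume $a_1^{(n_j)}\to s$, where necessarily $s\in J$. Then $\langle A\xi_k,\xi_l\rangle=\lim_j\langle A_{n_j}\xi_k,\xi_l\rangle=\delta_{kl}s$. Since $A$ is bounded and its matrix is $sI$, we get $A=sI$.

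Conversely, for $s\in J$, Lemma~\ref{onesidclupoi}, applied via Theorem~\ref{Qxcompactinterval} with $x=\xi_1$ (note $\langle A_n\xi_1,\xi_1\rangle=a_1^{(n)}$), gives $J=W_a(T,\xi_1)$. So there is a subsequence with $a_1^{(n_j)}\to s$. Along it, each diagonal entry converges to $s$, and the $A_n$ are uniformly bounded diagonal operators. Hence $A_{n_j}\to sI$ in WOT: matrix entries converge, and norm-boundedness plus density of finite spans upgrade this to WOT convergence. Thus $sI\in\mathfrak C$, which proves $\mathfrak C=\{sI:s\in J\}$.

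Finally, by Proposition~\ref{WaC}, for every unit vector $x$ we have $W_a(T,x)=\{\langle sIx,x\rangle:s\in J\}=J$. Also $W_a(T)=\bigcup_{s\in J}W(sI)=J$. The only step needing care is the entrywise-to-WOT passage and the identification $J=\operatorname{Clust}\{a_1^{(n)}\}$, which Theorem~\ref{Qxcompactinterval} supplies. The ratio estimate is routine, and I expect no real obstacle.
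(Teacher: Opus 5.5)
Your proof is correct and follows essentially the same route as the paper: the ratio estimate forces $a_k^{(n)}-a_1^{(n)}\to0$, so every WOT cluster point is $sI$ with $s=\lim_j a_1^{(n_j)}\in J$, and conversely each $s\in J$ is attained along a subsequence (you use entrywise convergence plus uniform boundedness to get WOT convergence, while the paper gets SOT convergence on the basis). The one small difference is that you obtain $\operatorname{Clust}\{a_1^{(n)}\}_{n=1}^{\infty}=J$ directly from Theorem~\ref{Qxcompactinterval} with $x=\xi_1$, which holds for every operator, whereas the paper derives it from Corollary~\ref{bblcon} and Lemma~\ref{onesidclupoi}, thereby using bounded-belowness at a step that does not need it.
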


\begin{proof}
Since $T$ is bounded below and bounded, there exist constants $m,M$ with $0<m\le w_k\le M$ for all $k\ge1$. For fixed $k$ and $n>k$,
\[
\frac{a_k^{(n)}}{a_1^{(n)}} = \left( \frac{w_{n+1}\cdots w_{n+k-1}} {w_1\cdots w_{k-1}} \right)^{1/n}.
\]
Hence
\[
\left(\frac{m}{M}\right)^{(k-1)/n} \le \frac{a_k^{(n)}}{a_1^{(n)}} \le \left(\frac{M}{m}\right)^{(k-1)/n},
\]
so $a_k^{(n)}/a_1^{(n)}\to1$. Consequently, for every $k\ge1$, the sequences $\{a_k^{(n)}\}_{n=1}^{\infty}$ and $\{a_1^{(n)}\}_{n=1}^{\infty}$ have the same asymptotic behavior: every subsequence of one converges if and only if the corresponding subsequence of the other converges, and in that case the two limits coincide.

Since $T$ is bounded below, Corollary~\ref{bblcon} gives
\[
|a_1^{(n+1)}-a_1^{(n)}|
\le \|A_{n+1}-A_n\|\to0.
\]
Hence, Lemma~\ref{onesidclupoi} gives
\[
\operatorname{Clust}\{a_1^{(n)}\}_{n=1}^{\infty}=J.
\]

We prove the inclusion $\{sI:s\in J\}\subseteq\mathfrak{C}$. Let $s\in J$, and choose a strictly increasing sequence of positive integers $\{n_j\}_{j=1}^{\infty}$ such that $a_1^{(n_j)}\to s$. Then, for every $k\ge1$, \[\|(A_{n_j}-sI)\xi_k\|=|a_k^{(n_j)}-s|\to 0.\]
Since $\{A_{n_j}-sI\}_{j=1}^{\infty}$ is uniformly bounded, and since a bounded net of operators converges to $0$ in the SOT if and only if it converges to $0$ at every vector of a fixed orthonormal basis \cite[Exercise~8.5(b)]{conway2000Course}, it follows that $A_{n_j}\to sI$ in the SOT, and hence also in the WOT. Thus $sI\in\mathfrak{C}$.

We prove the converse inclusion. Let $A\in\mathfrak{C}$, and choose a subsequence $\{A_{n_j}\}_{j=1}^{\infty}$ that converges to $A$ in the WOT. Set \[s\coloneqq \langle A\xi_1,\xi_1\rangle= \lim_{j\to\infty}\langle A_{n_j}\xi_1,\xi_1\rangle=\lim_{j\to\infty} a_1^{(n_j)}\in J.\] Therefore $\lim_{j\to\infty}a_k^{(n_j)}=s$ for every $k\ge1$. Hence, for all $k,\ell\ge1$,
\[
\langle A\xi_k,\xi_\ell\rangle = \lim_{j\to\infty}a_k^{(n_j)}\delta_{k\ell} = s\delta_{k\ell} = \langle s \xi_k,\xi_\ell\rangle.
\]
Thus $A=sI$, and therefore $\mathfrak C=\{sI:s\in J\}$.

Finally, Proposition~\ref{WaC} gives $W_a(T,x)=W_a(T)=J$ for every unit vector $x\in \mathcal{H}$.
\end{proof}

The interval $J$ need not be degenerate. The following choice of weights, due to Voiculescu \cite[Example~8.4]{haagerup2009Invariant}, yields an example for which $\{|T^n|^{1/n}\}_{n=1}^{\infty}$ does not converge in the WOT.

\begin{example}\label{voiweishi}
Consider the weighted shift with weights
\[
w_n\coloneqq
\begin{cases}
1,&2^j\le n<2^{j+1},\quad j\ \text{even},\\
2,&2^j\le n<2^{j+1},\quad j\ \text{odd}.
\end{cases}
\]

A direct computation gives
\[
\liminf_{n\to\infty}a_1^{(n)}=2^{1/3}, \qquad \limsup_{n\to\infty}a_1^{(n)}=2^{2/3}.
\]
Hence $J=[2^{1/3},2^{2/3}]$. Therefore, by Proposition~\ref{weishi},
\[
\mathfrak C = \{sI:2^{1/3}\le s\le2^{2/3}\},
\]
and
\[
W_a(T,x)=W_a(T)=[2^{1/3},2^{2/3}]
\]
for every unit vector $x\in \mathcal{H}$.
\end{example}

The following example provides a concrete application of Proposition~\ref{master} and, as promised, exhibits an operator $T$ for which $\mathfrak C$ is disconnected in the $\operatorname{WOT}$.

\begin{example}\label{wsnbb}
Choose a strictly increasing sequence of positive integers $\{p_i\}_{i=1}^{\infty}$ with $p_1\ge 2$ such that
\[
\lim_{i\to\infty}\frac{(p_1-1)+\cdots+(p_{i-1}-1)}{p_i-1}=0.
\]
For instance, $p_i=2^{2^i}+1$; indeed,
\[
\frac{2^{2}+2^{2^2}+\cdots+2^{2^{i-1}}}{2^{2^i}} \le \frac{(i-1)2^{2^{i-1}}}{2^{2^i}} = \frac{i-1}{2^{2^{i-1}}} \longrightarrow0.
\]
Let $T$ be the weighted shift with weights
\[
w_k\coloneqq
\begin{cases}
e^{-(p_i-1)}, & k=p_i\ \text{for some }i,\\
1, & \text{otherwise},
\end{cases}
\]
so that $\|T\|=1$ and $T$ is injective but not bounded below.

Let $x\coloneqq\xi_1+\xi_2$ and $y\coloneqq\xi_1-\xi_2$, and set
\[
d_n\coloneqq\langle A_nx,y\rangle=a_1^{(n)}-a_2^{(n)}.
\]
Since $w_1=1$,
\[
a_2^{(n)}=(w_2\cdots w_{n+1})^{1/n}=a_1^{(n)}\,w_{n+1}^{1/n}.
\]
Hence
\[
d_n=a_1^{(n)}\bigl(1-w_{n+1}^{1/n}\bigr)=
\begin{cases}
0, & n+1\notin\{p_i:i\ge1\},\\
(1-e^{-1})a_1^{(p_i-1)}, & n+1=p_i,
\end{cases}
\]
while
\[
a_1^{(p_i-1)} =\Bigl(\,\prod_{j<i}e^{-(p_j-1)}\Bigr)^{1/(p_i-1)} =\exp\Bigl(-\frac{\sum_{j<i}(p_j-1)}{p_i-1}\Bigr)\to1.
\]
Therefore $d_{p_i-1}=\bigl(1-e^{-1}\bigr)a_1^{(p_i-1)}\to1-e^{-1}$. Every term of $\{d_n\}_{n=1}^{\infty}$ is either $0$ or a term of $\{d_{p_i-1}\}_{i=1}^{\infty}$, both index sets are infinite, and $d_{p_i-1}\to1-e^{-1}\neq0$. Therefore
\begin{equation}\label{dnclust}
\operatorname{Clust}\{\langle A_nx,y\rangle\}_{n=1}^{\infty}
=\operatorname{Clust}\{d_n\}_{n=1}^{\infty}=\{0,\;1-e^{-1}\},
\end{equation}
which is disconnected. The same argument as in the proof of Proposition~\ref{WaC}, with $\langle Ax,x\rangle$ replaced by $\langle Ax,y\rangle$, gives
\[
\operatorname{Clust}\{\langle A_nx,y\rangle\}_{n=1}^{\infty}=\{\langle Ax,y\rangle:A\in\mathfrak C\}.
\]
Thus the map
\[
\mathfrak C\longrightarrow\{0,\;1-e^{-1}\},\qquad A\longmapsto\langle Ax,y\rangle,
\]
is a $\operatorname{WOT}$-continuous surjection. Hence, $\mathfrak C$ is disconnected in the $\operatorname{WOT}$.

This example also gives a concrete application of Proposition~\ref{master}. Since
\[
\langle |T^n|^{1/n}\xi_1,\xi_1\rangle-\|T^n\xi_2\|^{1/n}
=a_1^{(n)}-\||T^n|\xi_2\|^{1/n}
=a_1^{(n)}-a_2^{(n)}=d_n,
\]
it follows from \eqref{dnclust} that
\[
\limsup_{n\to\infty}\left(\langle |T^n|^{1/n}\xi_1,\xi_1\rangle-\|T^n\xi_2\|^{1/n}\right)
=\limsup_{n\to\infty}d_n=1-e^{-1}>0.
\]
Hence Proposition~\ref{master} yields a nontrivial hyperinvariant subspace containing $\xi_2$ but not $\xi_1$.

\end{example}

We now consider the sum of a multiplication operator and the Volterra operator and give a concrete application of Theorem~\ref{nontriinv}.
\begin{example}\label{mpv}
Let $\mathcal H=L^2([0,1])$. Consider 
\[
(Tf)(x)\coloneqq xf(x)+\int_0^x f(t)\,dt= ((M_x+V)f)(x) 
\]
for $f\in\mathcal H$ and almost every $x\in[0,1]$, where $M_x$ and $V$ are the multiplication operator by the independent variable and the Volterra operator, respectively.

It is known that (see \cite[pp.~38--40]{sarason1974Invariant} and \cite[p.~397, Observation~4.2]{ong1981Invariant})
\begin{equation}\label{mulvoliden}
VT^n=M_x^nV,\qquad
T^n=M_x^n+nM_x^{n-1}V,\qquad
T^*1=1,
\end{equation}
where $1$ denotes the constant function $1$ on $[0,1]$.
Let $P_1$ denote the orthogonal projection onto $\mathbb C1$. Since \eqref{mulvoliden} yields
\[
\langle T^{*n}T^nf,f\rangle
=\|T^nf\|^2
\ge |\langle T^nf,1\rangle|^2
=|\langle f,1\rangle|^2
=\|\langle f,1\rangle1\|^2
=\|P_1f\|^2,
\]
we have $T^{*n}T^n=|T^n|^2\ge P_1$. Therefore, by the L\"owner--Heinz inequality \eqref{lhe},
\[
|T^n|^{1/n}\ge P_1.
\]
Using \eqref{mulvoliden}, we obtain
\[
1=\langle P_1 1,1\rangle
\le \langle |T^n|^{1/n}1,1\rangle
\le \bigl\||T^n|^{1/n}\bigr\|
\le \bigl\|M_x^n+nM_x^{n-1}V\bigr\|^{1/n}
\le (1+n\|V\|)^{1/n}
\longrightarrow 1.
\]
By the definition of $w_a(T,\cdot)$ and the spectral radius formula, $w_a(T,1)=r(T)=1$.
By \eqref{waequiwhy} and \eqref{walr}, $w_a(T)=1$. We next compute $r(T,f)$. For $0\ne f\in\mathcal H$, put $F\coloneqq Vf$. Note that $F$ is absolutely continuous and $F'=f$ almost everywhere. Since $f\neq 0$, we have $F\neq 0$. Therefore
\[
\alpha_f\coloneqq\sup\{x\in[0,1]:F(x)\neq 0\}>0.
\]
Both $f$ and $F$ vanish almost everywhere on $(\alpha_f,1]$. Hence, by \eqref{mulvoliden},
\[
\|T^nf\|
\le \|M_x^nf\|+n\|M_x^{n-1}F\|
\le \alpha_f^n\|f\|+n\alpha_f^{n-1}\|F\|,
\]
and therefore
\[
\limsup_{n\to\infty}\|T^nf\|^{1/n}\le \alpha_f.
\]
On the other hand, the intertwining relation in \eqref{mulvoliden} gives
\[
\|V\|\,\|T^nf\|\ge \|VT^nf\|=\|M_x^nF\|.
\]
Hence
\[
\liminf_{n\to\infty}\|T^nf\|^{1/n}\ge \liminf_{n\to\infty}\|M_x^nF\|^{1/n}.
\]

For any $a\in (0,\alpha_f)$, we have
\[
a \|F\chi_{[a,\alpha_f]}\|^{1/n} \le\|M_x^nF\|^{1/n}\le\alpha_f \|F\|^{1/n},
\]
where $\chi_{[a,\alpha_f]}$ denotes the characteristic function of $[a,\alpha_f]$. Since both $F\chi_{[a,\alpha_f]}$ and $F$ are nonzero vectors, taking $n\to\infty$ and then $a\uparrow\alpha_f$, we obtain
\[
\lim_{n\to\infty}\|M_x^nF\|^{1/n}=\alpha_f.
\]
Consequently,
\[
r(T,f)=\lim_{n\to\infty}\|T^nf\|^{1/n}=\alpha_f=\sup\{x\in[0,1]:(Vf)(x)\ne0\}.
\]
Recalling the hyperinvariant linear manifold $M_\tau(T)=\{f\in\mathcal H:r(T,f)\le\tau\}$, we obtain, for every $\tau\in[0,1]$,
\[
M_\tau(T)=\{f\in\mathcal H:Vf=0\text{ a.e. on }(\tau,1]\}=\ker\bigl(M_{\chi_{(\tau,1]}}V\bigr).
\]
Here $M_{\chi_{(\tau,1]}}$ denotes multiplication by the characteristic function of $(\tau,1]$. The closedness of $M_\tau(T)$ follows from the kernel representation above, so $M_\tau(T)$ is a hyperinvariant subspace of $T$ for every $\tau\in[0,1]$. This family of hyperinvariant subspaces is a special case of Sarason's characterization of the invariant subspace lattice of $T$; see \cite[Section~9]{sarason1974Invariant}.

For any $\tau\in(0,1)$, define a nonzero vector
\[
f_\tau(x)\coloneqq
\begin{cases}
1, & 0\le x<\tau/2,\\
-1, & \tau/2\le x<\tau,\\
0, & \tau\le x\le1.
\end{cases}
\]

Hence
\[
r(T,f_\tau)=\tau<1=w_a(T,1).
\]
Thus, by Theorem~\ref{nontriinv}, $M_\tau(T)$ is a nontrivial hyperinvariant subspace of $T$ for every $\tau\in(0,1)$.

\end{example}


\section*{Declaration of generative AI and AI-assisted technologies in the manuscript preparation process}

The authors developed the concepts, formulated and proved all theorems, propositions, and lemmas presented in this work, and determined the structure and organization of the manuscript. OpenAI's ChatGPT (GPT-5.6 Sol) and Anthropic's Claude (Fable 5) were used to assist with the construction of Examples~\ref{wsnbb} and~\ref{mpv} and the associated computations. These tools were also used throughout the manuscript for language polishing and literature searches. The authors independently verified all AI-assisted constructions and computations and checked all references suggested by these tools against the original sources. The authors take full responsibility for the final manuscript.


\par\bigskip
\end{document}